\definecolor{TUblue}{rgb}{0,0.2157,0.4235}
\definecolor{TUgreen}{rgb}{0,0.3412,0.1725}
\pgfplotsset{%
	compat=newest,%
	every axis/.style={scale only axis},%
	grid style={densely dotted, semithick},%
}
\newcommand{\exact}{\star}
\newcommand{\kk}{{\underline{k}}}
\definecolor{ferngreen}{rgb}{0.31, 0.47, 0.26}
\definecolor{HUblue}{rgb}{0,0.2157,0.4235}
\definecolor{bleu}{rgb}{0.19, 0.55, 0.91}
\def\cps{C_{{\rm OL}}}
\def\cqu{C_{\rm qu}}
\def\crel{C_{\rm rel}}
\def\cs{C_{{\rm ML}}}
\def\cscs{C_{\rm SCS}}
\def\cscsunif{\widehat C_{\rm SCS}}
\def\csdsqu{C^2_{{\rm SD}}}
\def\csd{C_{{\rm SD}}}
\def\qctr{q_{\rm ctr}}
\def\revision#1{{\color{red}#1}}
\def\P{\mathbb{P}}
\def\S{\mathbb{S}}
\def\V{\mathbb{V}}
\def\Vp{\mathbb{V}^p}
\newcommand\etalg{\zeta_{L}}
\newcommand\etdisc{\eta_{L}}
\def\coarse{h}
\def\reff#1#2{\stackrel{\eqref{#1}}{#2}}
\def\set#1#2{\big\{#1 \,:\, #2 \big\}}
\def\RR{\mathbb{R}}
\def\N{\mathbb{N}}
\def\dx{{\rm d} {x}}
\def\A{\mathbb{A}}
\def\T{\mathbb{T}}
\DeclareMathOperator*{\esssup}{ess\,sup}
\DeclareMathOperator*{\essinf}{ess\,inf}
\DeclareMathOperator*{\cost}{cost}
\def\enorm#1{|\!|\!| #1 |\!|\!|}
\renewcommand{\enorm}[2][]{#1|\!#1|\!#1| #2 #1|\!#1|\!#1|}
\def\norm#1#2{\|#1\|_{#2}}
\renewcommand{\norm}[3][]{#1\| #2 \|_{#3}}
\def\f{\boldsymbol{f}}
\def\K{\boldsymbol{K}}
\def\MM{\mathcal{M}}
\def\QQ{\mathcal{Q}}
\def\TT{\mathcal{T}}
\def\UU{\mathcal{U}}
\def\VV{\mathcal{V}}
\title{\textit{\MakeLowercase{hp}}-robust multigrid solver on locally refined meshes\\ for FEM discretizations of symmetric elliptic PDE\MakeLowercase{s}} 
\thanks{The authors thankfully acknowledge support by the Austrian Science Fund (FWF) through the SFB \emph{Taming complexity in partial differential systems} (grant SFB F65) and the standalone project \emph{Computational nonlinear PDEs} (grant P33216). The Vienna School of Mathematics supports Julian Streitberger.}
\author{Michael Innerberger}
\author{Ani Mira{\c c}i}
\author{Dirk Praetorius}
\author{Julian Streitberger}
\address{TU Wien, Institute of Analysis and Scientific Computing, Wiedner Hauptstr. 8-10/E101/4, 1040 Vienna, Austria}
\email{michael.innerberger@asc.tuwien.ac.at}
\email{ani.miraci@asc.tuwien.ac.at}
\email{dirk.praetorius@asc.tuwien.ac.at}
\email{julian.streitberger@asc.tuwien.ac.at \quad \rm(corresponding author)}
\begin{document}

	\maketitle

	\begin{abstract}
	In this work, we formulate and analyze a geometric multigrid method for the iterative solution of the discrete systems arising from the finite element discretization of symmetric second-order linear elliptic diffusion problems. We show that the iterative solver contracts the algebraic error robustly with respect to the polynomial degree $p \ge 1$ and the (local) mesh size $h$. We further prove that the built-in algebraic error estimator which comes with the solver is $hp$-robustly equivalent to the algebraic error. The application of the solver within the framework of adaptive finite element methods with quasi-optimal computational cost is outlined. Numerical experiments confirm the theoretical findings.
	\end{abstract}

	\bigskip

	{\footnotesize {\bf Key words:} adaptive finite element method, local multigrid, $hp$-robustness, stable decomposition}

	\bigskip

	{\footnotesize {\bf AMS subject classifications:} 65N12, 65N30, 65N55, 65Y20}
	

	\section{Introduction}\label{section:introduction}
	
	Numerical schemes for PDEs aim at approximating the solution $u^{\star}$ of the weak formulation with an error below a certain tolerance at minimal computational cost. Since the accuracy is spoiled by singularities, e.g., in given data or domain geometry, adaptive finite element methods (AFEMs) employ the loop
	\begin{equation*}
		\begin{tikzpicture}
			\tikzset{myline/.style={draw=lightgray, line width=1.5pt, rounded corners}}
			\tikzset{afemnode/.style={myline, fill=lightgray!50, minimum width={width("ESTIMATE")+2pt}}}
			\node[afemnode] (solve) at (0,0) {\texttt{SOLVE}};
			\node[afemnode,right=of solve] (estimate) {\texttt{ESTIMATE}};
			\node[afemnode,right=of estimate] (mark) {\texttt{MARK}};
			\node[afemnode,right=of mark] (refine) {\texttt{REFINE}};
			
			\tikzset{connection/.style={myline, gray, -stealth, , opacity=0.5}}
			\draw[connection] (solve) -- (estimate);
			\draw[connection] (estimate) -- (mark);
			\draw[connection] (mark) -- (refine);
			\draw[connection, dashed] (estimate.north) -- +(0,10pt) -| ($(solve.north west)!0.66!(solve.north east)$);
			\draw[connection] (refine.north) -- +(0,15pt) -| ($(solve.north west)!0.33!(solve.north east)$);
			
			\node at ($(solve.east)!0.5!(estimate.west)+(5pt,12pt)$) {\scriptsize adaptive stopping};
			\node at ($(estimate.east)!0.5!(mark.west)+(30pt,17pt)$) {\scriptsize solution accurate enough?};
		\end{tikzpicture}
	\end{equation*}
	to obtain a sequence of meshes $\TT_L$ that resolve such singularities.
	For a large class of problems, it is known that AFEM is \emph{rate-optimal}, i.e., one can construct an estimator $\eta_L(u_L^\star)$ from the exact Galerkin solution $u_L^\star$ for the discretization error $\enorm{u^\star - u_L^\star}$ that decreases with the largest possible rate with respect to the number elements in $\TT_L$; see, e.g., the seminal works~\cite{Dorfler_marking_96, Morin_data_osc_00, Bin_Dah_afem_conv_rat_04, Stev_opt_FE_07, Casc_Kreu_Noch_Sieb_08} or the abstract overview~\cite{Cars_Feis_Page_Prae_ax_adpt_14} for $h$-adaptive FEM with fixed polynomial degree $p$.
	
	In practice, 
	the \texttt{SOLVE} module may become computationally expensive (in contrast to all other modules) when employing a direct solver; see, e.g., \cite{Pfeiler_Praetorius_20, GHPS_21, MooAFEM} for a discussion of implementational aspects.
	Thus, usually, an iterative solver is employed to compute an approximation $u_L$ of $u_L^\star$ on each level, and the exact Galerkin solution $u_L^\star$ is not available.
	The question of whether the approximations $u_L$ converge with optimal rate with respect to the \emph{overall computational cost} 
	was already treated in the seminal work~\cite{Stev_opt_FE_07} under some \emph{realistic} assumptions about an abstract iterative solver.
	The recent work~\cite{GHPS_21} employs nested iterations and an adaptive stopping criterion to steer a uniformly contractive iterative solver, linking the \texttt{SOLVE} and \texttt{ESTIMATE} module in the above scheme by an inner loop.
	Then, it is shown that even the \emph{full sequence of iterates} converges with optimal rates with respect to the overall computational cost.
	For this reason, the design of algebraic solvers that are uniformly contractive and robust with respect to the discretization parameters is of utmost importance.
	
	The hierarchical structure of AFEM and the very nature of the arising linear systems suggest to use a \emph{multilevel} solver; see, e.g.,
	\cite{Hack_MG_book_85, Bra_McC_Rug_85, Bra_Pasc_Sch_substr_I_86, , Ban_Dup_Yse_88, Bra_Pas_Xu_90,  Zhang_multilevel_92, Rud_FAM_93, Oswald_book_94}. Different adaptive methods integrating a multilevel solver are possible; see, e.g., \cite{Bai1987} for generating local meshes, and \cite{Ruede1993} for a fully adaptive multigrid method that steers the local refinement process. In the context of AFEM, the adaptively constructed hierarchy of locally refined meshes calls for suitable \emph{local} solvers.
	We refer to~\cite{CNX_12} for a multilevel preconditioner on a mesh hierarchy consisting of one bisection in each step and \cite{Hipt_Wu_Zheng_cvg_adpt_MG_12,Wu_Zheng_Multigrid} for multiplicative multigrid methods, all of which are robust with respect to the mesh size $h$.
	Though these works allow for higher-order FEM, an analytic and numerical study on the behavior with increasing polynomial degree was not presented.
	This aspect is treated, e.g., in~\cite{Pavarino_94, Sch_Mel_Pec_Zag_08, Ant_Mas_Ver_pMG_18, Brubeck_Farrell_hoFEM_21}, which design iterative solvers that are robust with respect to the polynomial degree $p$ on various types of polyhedral meshes.
	The recent own work~\cite{Mir_Pap_Voh_lam_21} proposes a $p$-robust geometric multigrid which comes with a \emph{built-in algebraic error estimator} $\zeta_L(u_L)$, which is suited perfectly for \textsl{a posterori} steering (i.e., adaptive termination) of the algebraic solver.
	However, the employed patchwise smoothing associated to every vertex and every level causes a linear dependence on the number of adaptive mesh levels $L$.

	In the present work, we modify the solver from \cite{Mir_Pap_Voh_lam_21} and overcome this dependence for locally refined meshes: we only apply \emph{local} lowest-order smoothing on patches which change in the refinement step on intermediate levels, whereas a patchwise (and hence parallelizable) higher-order smoothing on all patches of the finest level is applied.
	This solver only needs one post-smoothing step, requires no symmetrization of the procedure (see also \cite{DiPietro_Huelsemann_Matalon_Mycek_Ruede_Ruiz_21_HHO_MG}), and, in particular, has no tunable parameters since it utilizes optimal step-sizes on the error-correction stage.
	As the main result of the present work, we show that the proposed solver \emph{uniformly contracts} the algebraic error $\enorm{u_L^\star - u_L}$.
	Moreover, it comes with a built-in estimator $\zeta_L(u_L)$, which is shown to be \emph{equivalent} to 
	$\enorm{u_L^\star - u_L}$. Throughout, all involved estimates are robust in the discretization parameters $h$ and $p$. 
	
	As one potential application, we formulate an AFEM algorithm in the spirit of \cite{GHPS_21} that naturally embeds the multigrid solver and leverages the solver's built-in algebraic error estimator $\zeta_L(u_L)$ to stop the solver as soon as the discretization and algebraic error are comparable.
	Adapting the arguments of~\cite{GHPS_21}, we prove that, for fixed polynomial degree $p$, the AFEM algorithm guarantees optimal convergence rates with respect to overall computational cost.
	
	Using the open-source object-oriented 2D {\sc Matlab} code MooAFEM~\cite{MooAFEM}, we present
	a detailed numerical study of both the algebraic solver and the adaptive algorithm, including higher-order experiments and jumping coefficients.

	The outline of this paper reads as follows: Section~\ref{section:Setting} first poses the model problem and introduces some notation. Then, we state
	the proposed multigrid solver (Algorithm~\ref{algorithm:solver}) and formulate our main results on $hp$-robust contraction (Theorem~\ref{theorem:solver}) and algebraic error control (Corollary~\ref{corollary:solver}).
	As a potential application, Section~\ref{subsection:AFEM} formulates an AFEM algorithm (Algorithm~\ref{algorithm:afem}) which employs nested iteration and an adaptive stopping criterion for the iterative solver using the built-in \textsl{a~posteriori} estimator for the algebraic error. Theorem~\ref{theorem:optimal-cost} proves optimal computational complexity of the proposed AFEM algorithm.
	After we confirm the theoretical results by numerical examples in Section~\ref{section:numerical_experiments}, we present proofs of the main results in Section~\ref{section:proofs}.
	For better readability, we precede these proofs with three subsections presenting their core arguments: geometric properties of the meshes $\TT_L$, an $hp$-robust stable decomposition combining a local lowest-order multilevel stable decomposition from~\cite{Wu_Zheng_Multigrid} with a one-level $p$-robust decomposition from~\cite{Sch_Mel_Pec_Zag_08}, and a strengthened Cauchy--Schwarz inequality in the spirit of~\cite{CNX_12,Hipt_Wu_Zheng_cvg_adpt_MG_12}.
	

	\section{\textit{hp}-robust multigrid solver}\label{section:Setting}
	
	In this section, we formulate the model problem, the proposed geometric multigrid method, and the main results, while the proofs are postponed to Section~\ref{section:proofs}.

	\subsection{Model problem}\label{subsection:model_problem}
	For $  {d \! \in \! \{ 1, 2, 3 \}}$, let ${\Omega \! \subset \! \RR^d}$ be a bounded Lipschitz domain with polygonal boundary $\partial \Omega$.
	Given $f \in L^2(\Omega)$ and $\f \in [L^2(\Omega)]^d$, we consider the second-order linear elliptic diffusion problem
	\begin{align} \label{equation:strong_form}
		\begin{split}
			- {\rm div} (\K \nabla u^{\star})
			&= f - {\rm div} \f \phantom{0} \text{in } \Omega,\\
			u^{\star} &= 0 \phantom{f - {\rm div} \f}  \text{on }  \partial \Omega,
		\end{split}
	\end{align}
	where $\K \in [L^\infty(\Omega)]^{d \times d}_{\mathrm{sym}}$ is the symmetric and uniformly positive definite diffusion coefficient. More precisely, given a conforming simplicial triangulation $\TT_\coarse$ of $\Omega$ into compact simplices, we have $\K\vert_T \in [ W^{1,\infty}(T)]^{d \times d}$ for all $T \in \TT_\coarse$. For $x \in \Omega$ we denote the maximal and minimal eigenvalue of $\K(x) \in \R^{d \times d}_{\rm sym}$ by $\lambda_{\max}(\K(x))$ and $\lambda_{\min}(\K(x))$, respectively, and define $\Lambda_{\max} \coloneqq \esssup_{x \in \Omega} \lambda_{\max}(\K(x))$ as well as $\Lambda_{\min} \coloneqq \essinf_{x \in \Omega} \lambda_{\min}(\K(x))$. With $\langle \cdot \, , \, \cdot \rangle_{\omega}$ denoting the usual $L^2(\omega)$-scalar product for a measurable subset $\omega \subseteq \Omega$,
	the weak formulation of~\eqref{equation:strong_form} seeks
	$u^{\star} \in \V := H^1_0(\Omega)$
	solving
	\begin{align}\label{equation:weak_form}
		\edual{u^{\star}}{v}_{\Omega}
		:= \dual{\K \nabla u^\star}{\nabla v}_{\Omega}
		= \dual{f}{v}_{\Omega} + \dual{\f}{\nabla v}_{\Omega}
		\eqqcolon F(v)
		\quad \text{for all } v \in  \V.
	\end{align}
	We note that $\edual\cdot\cdot_{\Omega}$ is a scalar product and the induced semi-norm $\enorm{u}_{\Omega}^2 := \edual{u}{u}_{\Omega}$ is an equivalent norm on $\V$. Therefore, the Lax--Milgram lemma yields existence and uniqueness of the weak solution $u^{\star} \in \V$. For $\omega = \Omega$, we omit the index $\omega$ throughout.
	
	To discretize~\eqref{equation:weak_form}, denote for a polynomial degree $p \ge 1$ and a triangle $T \in \TT_\coarse$ the space of all polynomials on $T$ of degree at most $p$ with $\P^{p}(T)$ and define
	\begin{align}\label{equation:discrete_spaces}
		\S^q(\TT_\coarse) \! := \set{v_\coarse  \in   C (\Omega)}{v_\coarse|_T  \in   \P^{q}(T) \ \text{for all } T  \!\in \! \TT_\coarse} \quad \text{with } q \in \{1,p\}.
	\end{align}
	With the definition $\V_\coarse^p \coloneqq \S^p_0(\TT_\coarse) \coloneqq \S^p(\TT_\coarse) \cap H^1_0(\Omega)$,
	the discrete problem consists of finding $u^{\star}_{\coarse} \in \Vp_{\coarse}$ such that
	\begin{equation}\label{equation:discrete_form}
		\begin{aligned}
			\edual{u^{\star}_{\coarse}}{v_{\coarse}}&= F(v_{\coarse})
			\quad \text{for all } v_{\coarse} \in \V_\coarse^p.
		\end{aligned}
	\end{equation}
	Clearly, the formulation of the discrete problem~\eqref{equation:discrete_form} hinges on the choice of the mesh \(\TT_\coarse\), which directly influences the quality of $u^{\star}_{\coarse}$ as an approximation of $u^{\star}$.
	Note that~\eqref{equation:discrete_form} can be rewritten as a symmetric and positive definite linear system by introducing a basis of $\V_\coarse^p$. However, we opt to work instead with the functional basis-independent description.
	
	\subsection{Mesh and space hierarchy}\label{subsection:mesh_hierarchy}
	We suppose that the refinement strategy in the module \texttt{REFINE} is newest vertex bisection (NVB); see, e.g., \cite{trax_97, stevenson2008} and Figure~\ref{figure:nvb} for an illustration in \(2\mathrm{D}\).
	\begin{figure}[ht!]
		\includegraphics[width=0.22\textwidth]{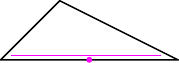} \quad
		\includegraphics[width=0.22\textwidth]{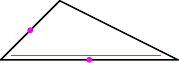} \quad
		\includegraphics[width=0.22\textwidth]{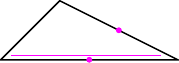} \quad
		\includegraphics[width=0.22\textwidth]{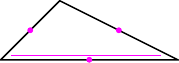} \\
		\includegraphics[width=0.22\textwidth]{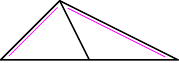} \quad
		\includegraphics[width=0.22\textwidth]{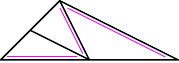}\quad
		\includegraphics[width=0.22\textwidth]{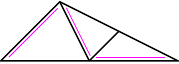}\quad
		\includegraphics[width=0.22\textwidth]{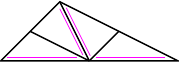}
		\caption{\label{figure:nvb}Schematic of 2D NVB refinement pattern: For each triangle $T \in \TT$, there is one fixed \textsl{refinement edge} $E_T$ indicated by the extra pink line. The pink dots indicate edges that are marked for refinement. If an element is marked for refinement, at least its refinement edge is marked for refinement (top). Iterated bisection refines a marked element into 2, 3, or 4 children (bottom).}
	\end{figure}
	
	Let $\TT_0$ be the conforming initial mesh. We refer to \cite{stevenson2008} for NVB with admissible $\TT_0$ and \(d \ge 2\), to \cite{Karkulik2013a} for NVB with general \(\TT_0\) for \(d=2\), and to the recent work \cite{dgs2023} for NVB with general $\TT_0$ in any dimension. Throughout, we suppose that \(\TT_0\) is admissible. In the $1\mathrm{D}$ case, \cite{Aurada2013a} splits each element into two children of half-length and additionally ensures that any two neighboring elements have uniformly comparable diameter. Let $\T \coloneqq \T(\TT_0)$ be the set of all refinements of $\TT_0$ that can be obtained by arbitrarily many steps of NVB.
	
	From now on, suppose that we are given a sequence $\{ \TT_\ell \}_{\ell = 0}^L \subset \T$ of successively refined triangulations, i.e., for all $\ell = 1, \dots, L$, it holds that $\TT_\ell = {\tt REFINE}(\TT_{\ell-1}, \MM_{\ell-1})$ is the coarsest conforming triangulation obtained by NVB, where all marked elements $\MM_{\ell-1} \subseteq \TT_{\ell-1}$ have been refined by (at least) one bisection.
	We note that NVB-refinement generates meshes that are uniformly $\gamma$-shape regular, i.e., 
	\begin{subequations}\label{equation:shape_regularity}
		\begin{align}\label{equation:gamma_shape}
			\max_{\ell = 0, \dots, L}  \max \limits_{T  \in \TT_\ell} \frac{\diam(T)}{\vert T \vert^{1/d}} \le \gamma < \infty,
		\end{align}
		and
		\begin{equation}\label{equation:shape_regularity2}
			\max_{\ell = 0, \dots, L} \max_{T  \in \TT_{\ell}} \max_{\substack{T^\prime  \in \TT_{\ell} \\ T \cap T^\prime \neq \emptyset}} \frac{\diam(T)}{\diam(T^\prime)} \le \gamma < \infty.
		\end{equation}
	\end{subequations}
	where $\gamma$ depends only on $\TT_0$ and is, in particular, independent of $L$ and the meshes $\TT_1, \dots, \TT_L$; see, e.g., \cite[Theorem~2.1]{stevenson2008} for \(d \ge 2\) or \cite{Aurada2013a} for \(d=1\). 
	We note that~\eqref{equation:gamma_shape} implies \eqref{equation:shape_regularity2} for $d \ge 2$, while \eqref{equation:gamma_shape} is trivial with \(\gamma = 1\) and independent of \eqref{equation:shape_regularity2} for \(d=1\).
	In addition, we define the quasi-uniformity constant
	\begin{align}\label{equation:coarse_quasiuniformity}
		C_{\rm qu} := \min\set{\diam(T)/\diam(T')}{T, T' \in \TT_0} \in (0,1].
	\end{align}
	
	For each mesh $\TT_\ell$, let $\VV_\ell$ denote the set of vertices. Given a vertex $z \in \VV_\ell$, we denote by $\TT_{\ell,z} :=  \set{T \in \TT_\ell}{z \in T}$ the patch of elements of $\TT_\ell$ that share the vertex $z$.
	The corresponding (open) patch subdomain is denoted by $\omega_{\ell, z} := {\rm interior}(\bigcup_{T \in \TT_{\ell,z}} T)$ and its size by $h_{\ell, z} \coloneqq \max_{T  \in \TT_{\ell, z}} h_T \coloneqq \max_{T  \in \TT_{\ell, z}} \vert T \vert^{1/d}$. Finally, we
	denote by $\VV_\ell^+$ the set of new vertices in $\TT_\ell$ and the pre-existing vertices of $\TT_{\ell-1}$ whose associated patches have shrunk in size in the refinement step $\ell$, i.e.,
	\[
	\VV_0^+ := \VV_0
	\quad \text{and} \quad
	\VV_\ell^+ := \VV_\ell \setminus \VV_{\ell-1} \cup \set{z \in \VV_\ell \cap \VV_{\ell-1}}{\omega_{\ell, z}  \neq \omega_{\ell-1,z}}
	\quad \text{for } \ell \ge 1.
	\]
	While this notation is used in the analysis of the solver below, the presentation of Algorithm~\ref{algorithm:solver} is more compact with the abbreviation $\mathcal{N}_\ell = \VV_\ell^+$ for $\ell = 1, \dots, L-1$ and $\mathcal{N}_L \coloneqq \VV_{L}^+$ for $p=1$ and $\mathcal{N}_L \coloneqq \VV_L$ otherwise, where we recall that $p \in \N$ is the fixed polynomial degree of the FEM ansatz functions.
	
	From the definition of the discrete FEM spaces~\eqref{equation:discrete_spaces} and NVB-refinement, we see that there holds nestedness
	\begin{align} \label{equation:space_nestedness}
		\V_0^1 \subseteq \V_1^1 \subseteq \dots \subseteq \V_{L-1}^1 \subseteq \V_L^p.
	\end{align}
	Furthermore, we require the local spaces
	\begin{align} \label{equation:local_spaces}
		\V^q_{\ell,z} := \S^q_0(\TT_{\ell, z})
		\quad \text{for all vertices } z \in \VV_\ell \text{ and } q \in \{1,p\},
	\end{align}
	where we use $q = 1$ for $\ell = 0, \dots, L-1$ and $q = p$ for $\ell = L$; see Figure~\ref{Small_patch} for the illustration of the degrees of freedom for $p =2$.

	\begin{figure}[htpb!]
		\centering
		\includegraphics[scale=0.9]{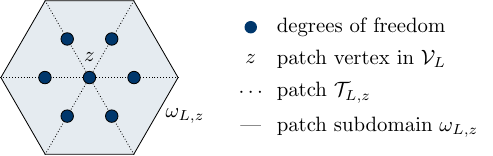}
		\caption{Illustration of degrees of freedom ($p=2$) for the space
			$\V_{L,z}^{p}$ associated to the patch $\TT_{L,z}$.}
		\label{Small_patch}
	\end{figure}

	\subsection{Multigrid solver}\label{section:multilevel_solver}
	In the following, we introduce a local geometric multigrid method, which will serve as iterative solver within the \texttt{SOLVE} module of an adaptive FEM algorithm.
	Each full step of the proposed multigrid method can be mathematically described by an iteration operator 	$\Phi \colon \V_L^p \to \V_L^p$, i.e., given the current approximation $u_L \in \V_L^p$, the solver generates the new iterate $\Phi(u_L) \in \V_L^p$.
	
	The main ingredients in the solver construction are an inexpensive global residual solve on $\TT_0$ and local residual solves on all patches $\TT_{\ell, z}$ for $z\in \VV_{\ell}^+$ on the intermediate levels $\ell = 1, \ldots, L-1$ and all patches on the finest level $\TT_L$ when $p>1$.
	For ease of notation, we define the algebraic residual functional $R_L \colon \V_L^p \to \RR$ by
	\begin{align} \label{equation:residual_functional}
		v_L \in \V_L^p \mapsto R_L(v_L) := F(v_L) - \edual{u_L}{v_L} = \edual{u_L^\star - u_L}{v_L} \in \RR.
	\end{align}
	To construct the new iterate $\Phi(u_L)$, levelwise residual liftings of the algebraic error are added to the current approximation $u_L$. The same levelwise residual liftings are used to define an \textsl{a~posteriori} error estimator $\zeta_L(u_L)$ for the algebraic error, i.e., the solver comes with a built-in estimator.
	
	\begin{algorithm}[One step of the optimal local multigrid solver]
		\label{algorithm:solver}
		{\bfseries Input:} Current approximation $u_L \in \V_L^p$, meshes $\{\TT_\ell \}_{\ell = 0}^L$, polynomial degree $p \in \mathbb{N}$.\\
		{\bfseries Solver step:} Perform the following steps~{\rm(i)--(ii)}:
		\begin{enumerate}
			\item[\rm(i)] {\bfseries Global lowest-order residual problem on the coarsest level}:
			\begin{itemize}
				\item Compute $\rho_0 \in \V_0^1$ by solving
				\begin{align}\label{equation:alg_step1}
					\edual{\rho_0}{v_0} = R_L(v_0) \quad \text{for all } v_0 \in \V_0^1.
				\end{align}
				\item Define step-size $\lambda_0 := 1$.
				\item Initialize algebraic lifting $\sigma_0 := \lambda_0 \rho_0$ and \textsl{a~posteriori} estimator ${\zeta_0^2 \! := \! \enorm{\lambda_0 \rho_0}^2}\!$.
			\end{itemize}
			
			\item[\rm(ii)] {\bfseries Local residual-update:}
			For all
			$\ell = 1, \dots, L$, do the following steps, where $q = 1$ for $\ell = 1, \dots, L-1$ and $q = p$ for $\ell = L$:
			
			\begin{itemize}
				\item For all $z \in \mathcal{N}_\ell$, compute $\rho_{\ell, z} \in \V_{\ell,z}^{q}$  by solving
				\begin{align}\label{equation:alg_step2}
					\edual{\rho_{\ell, z}}{v_{\ell, z}}
					= R_L(v_{\ell, z}) - \edual{\sigma_{\ell-1}}{v_{\ell, z}}
					\quad \text{for all } v_{\ell, z} \in \V_{\ell,z}^{q}.
				\end{align}
				
				\item Define  the line-search step-size $s_\ell := (R_L(\rho_\ell) - \edual{\sigma_{\ell-1}}{\rho_\ell})/\enorm{\rho_\ell}^2$, with $
				\rho_\ell := \sum_{z \in \mathcal{N}_\ell} \rho_{\ell, z}$ and the understanding that $0/0 \coloneqq 0$ if $\rho_\ell = 0$, and

				\begin{align*}
					\lambda_\ell  \! \coloneqq
					\begin{cases}
						s_\ell &\text{if } s_\ell \le d+1 \
						\text{ or $\big[\ell = L$ and $p>1\big]$},
						\\
						(d+1)^{-1} & \, \textrm{otherwise.}
					\end{cases}
				\end{align*}

				\item Update $\sigma_\ell := \sigma_{\ell-1} + \lambda_\ell \rho_\ell$ and $\zeta_\ell^2 := \zeta_{\ell-1}^2 + \lambda_\ell \sum_{z \in \mathcal{N}_\ell} \enorm{\rho_{\ell,z}}^2$.
			\end{itemize}
			
		\end{enumerate}
		
		{\bfseries Output:} Improved approximation $\Phi(u_L) := u_L + \sigma_L \in \V_L^p$ and associated \textsl{a~posteriori} estimator $\etalg (u_L) := \zeta_L$ of the algebraic error.
	\end{algorithm}
	
	\begin{remark}[Construction of the new iterate]\label{remark:iterate_construction}
		The construction of $\Phi(u_L)$ from $u_L$ by Algorithm~\ref{algorithm:solver} can be seen as one iteration of a V-cycle multigrid with no pre- and one post-smoothing step, and a step-size at the error correction stage. The smoother on each level is additive Schwarz associated to patch subdomains where the local problems~\eqref{equation:alg_step2} are defined. This is equivalent to diagonal Jacobi smoothing for $p=1$ (e.g., on intermediate levels) and block-Jacobi smoothing for $p>1$ (e.g., on the finest level).
		The choice and use of the step-sizes $\lambda_\ell$ in Algorithm~\ref{algorithm:solver} {\rm(ii)} comes from a line-search approach; see, e.g.,~\cite[Lemma 4.3]{Mir_Pap_Voh_lam_21} and one of the earlier works~\cite{Heinrichs_88_line_relaxation_MG}. However, if the step-size from the line-search is too large, we use instead a fixed damping parameter offsetting the $d+1$ patch overlaps. We note that this case never occurred in practice in any of our numerical experiments.
	\end{remark}
	
	\begin{remark}[Computational effort and speed of convergence]
		We note that we apply a patchwise Cholesky factorization on the finest level. Hence, the computational effort for the local residual solve on the finest mesh $\TT_L$ in dependence on the polynomial degree $p$ is of order $\mathcal{O}(p^{3d}\#\TT_L)$. The presented algorithm is a linear method. One could symmetrize the procedure by adding one pre-smoothing step to define a preconditioner in the hope of accelerating convergence with the help of conjugate gradients. However, in our experience, the patchwise pre-smoothing typically did not yield considerable algebraic error decrease; see, e.g.~\cite{DiPietro_Huelsemann_Matalon_Mycek_Ruede_Ruiz_21_HHO_MG}, while still doubling the number of smoothing operations of a V-cycle. The remaining steps needed to compute the new approximation stem from classical multigrid solvers (such as intergrid operators). We stress that the overall effort does not depend on the number of levels \(L\).
	\end{remark}
	
	\begin{remark}[Nested iterations]
		In the context of adaptive FEM, the 
		solver does not start from an arbitrary initial guess on each newly-refined mesh but from the final approximation of the previous level (see Algorithm~\ref{algorithm:afem} below). This will ensure \textsl{a~posteriori} error control in each step after initialization as well as optimal computational cost. From the algebraic solver perspective, such an approach can be seen as a full multigrid method over the evolving hierarchy of meshes whose number of cycles is determined by the adaptive stopping criterion.
	\end{remark}
	
	\subsection{Main result} \label{section:main_theorems}
	
	This subsection formulates the main results regarding the iterative solver stating the \emph{contraction} of the multigrid solver and \emph{reliability} of the built-in \textsl{a~posteriori} estimator of the algebraic error. Both results hold \emph{robustly} in the number of levels $L$ and the polynomial degree $p$.
	
	\begin{theorem}\label{theorem:solver}
		Let $u_L^\star \in \V_L^p$ be the (unknown) finite element solution of~\eqref{equation:discrete_form} and let $v_L \in \V_L^p$
		be arbitrary. Let $\Phi(v_L) \in \V_L^p$ and $\etalg(v_L)$ be generated by Algorithm~\ref{algorithm:solver}. Then, the solver iterates and the estimator are connected by
		\begin{align}\label{equation:pythagoras}
			\enorm{u_L^\star - \Phi(v_L)}^2
			\le \enorm{u_L^\star - v_L}^2 -\etalg(v_L)^2.
		\end{align}
		Moreover, the solver contracts the error, i.e., there exists $0 < \qctr < 1$ such that
		\begin{equation}\label{equation:contraction}
			\begin{aligned}
				\enorm{u_L^\star - \Phi(v_L)} \le  \qctr \, \enorm{u_L^\star - v_L}.
			\end{aligned}
		\end{equation}
		Finally, the estimator is a two-sided bound of the algebraic error, i.e., there exists $ \crel > 1$ such that
		\begin{align}\label{equation:equivalence}
			\etalg(v_L)
			\le \enorm{u_L^\star - v_L}
			\le  \crel \, \etalg(v_L).
		\end{align}
		The contraction and reliability constants $\qctr$ and $\crel$ depend only on the space dimension $d$, the $\gamma$-shape regularity~\eqref{equation:shape_regularity},  the quasi-uniformity constant $C_{\rm qu}$ from~\eqref{equation:coarse_quasiuniformity}, $\Lambda_{\max}/ \Lambda_{\min}$, and \(\max_{T  \in \TT_{L}}\Vert \div(\K) \Vert_{L^{\infty}(T)} / \Lambda_{\min}\). In particular, $\qctr$ is independent of the polynomial degree $p$, the number of mesh levels $L$, and the meshes $\TT_1, \dots, \TT_L$.
	\end{theorem}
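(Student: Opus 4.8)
The proof of Theorem~\ref{theorem:solver} hinges on a single functional-analytic fact: an $hp$-robust \emph{stable decomposition} of the finite element space $\V_L^p$ compatible with the subspace structure $\V_0^1 \subseteq \V_1^1 \subseteq \dots \subseteq \V_{L-1}^1 \subseteq \V_L^p$ and the local patch spaces $\V_{\ell,z}^q$ used in Algorithm~\ref{algorithm:solver}. Concretely, I would first establish that every $v_L \in \V_L^p$ admits a decomposition $v_L = v_0 + \sum_{\ell=1}^L \sum_{z \in \NN_\ell} v_{\ell,z}$ with $v_0 \in \V_0^1$, $v_{\ell,z} \in \V_{\ell,z}^q$, satisfying $\enorm{v_0}^2 + \sum_{\ell,z}\enorm{v_{\ell,z}}^2 \le \cscs^2 \enorm{v_L}^2$, where $\cscs$ depends only on $d$, $\gamma$, $\cqu$, and $\Lambda_{\max}/\Lambda_{\min}$ — in particular \emph{not} on $p$ or $L$. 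This is exactly the decomposition announced in the introduction as a combination of the local lowest-order multilevel decomposition of~\cite{Wu_Zheng_Multigrid} (on levels $0,\dots,L-1$) with the one-level $p$-robust decomposition of~\cite{Sch_Mel_Pec_Zag_08} (on the finest level, only when $p>1$). The second structural ingredient is the \emph{strengthened Cauchy--Schwarz inequality} on bisection-generated meshes (as in~\cite{CNX_12,Hipt_Wu_Zheng_cvg_adpt_MG_12}), which controls the cross-level terms $\sum_{\ell \ne m} \edual{w_\ell}{w_m}$ by a geometrically decaying factor times $\big(\sum_\ell \enorm{w_\ell}^2\big)$; this yields the matching upper bound $\enorm{\sum_\ell w_\ell}^2 \le \cscsunif \sum_\ell \enorm{w_\ell}^2$ needed to turn the additive Schwarz theory into contraction.

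**From the decomposition to the three estimates.** Given the stable decomposition, I would proceed as follows. Set $e_L := u_L^\star - v_L$ with residual $R_L(\cdot) = \edual{e_L}{\cdot}$. For the Pythagoras-type identity~\eqref{equation:pythagoras}, observe that $\Phi(v_L) = v_L + \sigma_L$ with $\sigma_L = \sum_{\ell=0}^L \lambda_\ell \rho_\ell$; expanding $\enorm{e_L - \sigma_L}^2 = \enorm{e_L}^2 - 2\edual{e_L}{\sigma_L} + \enorm{\sigma_L}^2$ and using the defining equations~\eqref{equation:alg_step1}--\eqref{equation:alg_step2} together with the line-search optimality of the $\lambda_\ell$ (which forces $\edual{e_L}{\sigma_L} - \tfrac12\enorm{\sigma_L}^2 \ge \tfrac12\sum_\ell \lambda_\ell \sum_z \enorm{\rho_{\ell,z}}^2 = \tfrac12\etalg(v_L)^2$), one obtains~\eqref{equation:pythagoras}; here the damped branch $\lambda_\ell = (d+1)^{-1}$ is exactly what is needed so that the $d+1$ patch overlaps on a level do not spoil this bound, and the strengthened Cauchy--Schwarz inequality handles the overlaps \emph{between} levels. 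For the lower bound in~\eqref{equation:equivalence}, i.e.\ $\etalg(v_L) \le \enorm{e_L}$, I would note it is an immediate consequence of~\eqref{equation:pythagoras} since $\enorm{u_L^\star - \Phi(v_L)}^2 \ge 0$. For the reliability bound $\enorm{e_L} \le \crel\,\etalg(v_L)$: plug the stable decomposition of $e_L$ itself into $\enorm{e_L}^2 = R_L(e_L) = R_L(v_0) + \sum_{\ell,z} R_L(v_{\ell,z})$, rewrite each $R_L(v_{\ell,z})$ using~\eqref{equation:alg_step2} as $\edual{\rho_{\ell,z}}{v_{\ell,z}} + \edual{\sigma_{\ell-1}}{v_{\ell,z}}$, and estimate by Cauchy--Schwarz plus the stability bound $\sum\enorm{v_{\ell,z}}^2 \le \cscs^2\enorm{e_L}^2$; absorbing the $\enorm{\rho_{\ell,z}}$ and $\enorm{\sigma_{\ell-1}}$ contributions into $\etalg(v_L)^2$ (using, again, the strengthened Cauchy--Schwarz to bound $\enorm{\sigma_\ell}$ by $\sum_m\enorm{\lambda_m\rho_m}$) gives $\enorm{e_L}^2 \lesssim \etalg(v_L)\,\enorm{e_L}$, hence reliability. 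Finally, contraction~\eqref{equation:contraction} follows by combining the two previous items: $\enorm{u_L^\star - \Phi(v_L)}^2 \le \enorm{e_L}^2 - \etalg(v_L)^2 \le (1 - \crel^{-2})\,\enorm{e_L}^2$, so $\qctr = \sqrt{1 - \crel^{-2}} \in (0,1)$ with the advertised parameter dependence.

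**Main obstacle.** The genuinely hard part is the $hp$-robust stable decomposition with a constant independent of both $p$ \emph{and} the number of levels $L$ on the adaptively (locally) bisected hierarchy. The $p$-robustness on a single patch is available from~\cite{Sch_Mel_Pec_Zag_08}, and $L$-robustness of local multilevel decompositions on NVB hierarchies is available from~\cite{Wu_Zheng_Multigrid}, but gluing them requires care: one must interpolate a general $v_L \in \V_L^p$ first into $\V_{L-1}^1$ (or rather into a suitable lowest-order space on level $L$) in an $H^1$-stable, locally supported way so that the $p>1$ part lives only in the finest-level patch corrections $\V_{L,z}^p$, and then decompose the lowest-order remainder across levels $0,\dots,L-1$ using only the patches indexed by $\NN_\ell = \VV_\ell^+$ — i.e.\ only patches that have actually changed. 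Showing that restricting to $\VV_\ell^+$ (rather than all of $\VV_\ell$) still yields a stable decomposition is precisely where the geometric mesh properties of Section~\ref{section:proofs} enter, and controlling the interplay of the $p$-dependent finest level with the $h$-dependent coarse levels in the strengthened Cauchy--Schwarz estimate is the second delicate point. Everything else is bookkeeping with the line-search identities and Cauchy--Schwarz.
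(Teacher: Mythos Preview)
Your overall plan is correct and aligns with the paper's approach: the two structural ingredients are indeed the $hp$-robust stable decomposition (Proposition~\ref{lemma:multilevel_decomposition}, combining \cite{Sch_Mel_Pec_Zag_08} and \cite{Wu_Zheng_Multigrid}) and the strengthened Cauchy--Schwarz inequality on bisection hierarchies (Proposition~\ref{lem:str_CS_adapt}), and the logical chain \eqref{equation:pythagoras} $\Rightarrow$ lower bound, stable decomposition $+$ SCS $\Rightarrow$ upper bound, then~\eqref{equation:pythagoras} $+$ upper bound $\Rightarrow$ contraction is exactly how the paper proceeds.

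There is, however, one conceptual misstep: you claim that the strengthened Cauchy--Schwarz inequality is needed in the proof of~\eqref{equation:pythagoras} to ``handle the overlaps between levels''. It is not. The Pythagoras inequality follows from a \emph{pure identity} plus the elementary patch-overlap estimate $\enorm{\lambda_\ell\rho_\ell}^2 \le \lambda_\ell\sum_z\enorm{\rho_{\ell,z}}^2$ (Lemma~\ref{lemma:loc_global_stepsizes}). The cross-level terms $2\sum_{k<\ell}\edual{\lambda_k\rho_k}{\lambda_\ell\rho_\ell}$ in $\enorm{\sigma_{L-1}}^2$ cancel \emph{exactly} against the corresponding terms in $-2\edual{e_L}{\sigma_{L-1}}$, because the local problems~\eqref{equation:alg_step2} are posed with the \emph{already updated} residual $R_L(\cdot)-\edual{\sigma_{\ell-1}}{\cdot}$: summing $\edual{e_L}{\rho_{\ell,z}} = \enorm{\rho_{\ell,z}}^2 + \edual{\sigma_{\ell-1}}{\rho_{\ell,z}}$ over $z$ and $\ell$ produces precisely the telescoping that eliminates all inter-level couplings. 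This is the multiplicative (Gauss--Seidel) structure of the V-cycle at work, and it is why~\eqref{equation:pythagoras} holds with constant~$1$ in front of $\etalg(v_L)^2$, independently of any decomposition or SCS constant. The strengthened Cauchy--Schwarz inequality enters only in the reliability proof (upper bound in~\eqref{equation:equivalence}), where the cross terms $\sum_{\ell}\sum_{k<\ell}\edual{\lambda_k\rho_k}{\sum_z v_{\ell,z}}$ involving the stable-decomposition pieces $v_{\ell,z}$ do \emph{not} telescope and must be estimated.
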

	
	\begin{corollary}\label{corollary:solver}
		The reliability of the estimator in~\eqref{equation:equivalence} is equivalent to the solver contraction~\eqref{equation:contraction}.
		In particular, this also yields that
		\begin{align} \label{equation:lower_bound}
			\enorm{u_L^\star - \Phi(v_L)} \le \qctr \crel \, \etalg (v_L).
		\end{align}
	\end{corollary}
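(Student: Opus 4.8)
\emph{Proof strategy.} The plan is to use the Pythagoras-type estimate~\eqref{equation:pythagoras} as the bridge between reliability~\eqref{equation:equivalence} and contraction~\eqref{equation:contraction}. The point is that~\eqref{equation:pythagoras} is cost-free: it follows from the optimal step-size (line-search) construction of the $\lambda_\ell$ in Algorithm~\ref{algorithm:solver} together with the geometric finite-overlap bound $\enorm{\sum_{z\in\mathcal N_\ell}\rho_{\ell,z}}^2\le(d+1)\sum_{z\in\mathcal N_\ell}\enorm{\rho_{\ell,z}}^2$, and hence uses none of the stable-decomposition machinery. Moreover, the lower estimate $\etalg(v_L)\le\enorm{u_L^\star-v_L}$ in~\eqref{equation:equivalence} is already contained in~\eqref{equation:pythagoras} (drop the nonnegative term $\enorm{u_L^\star-\Phi(v_L)}^2$), so the only substantive content of~\eqref{equation:equivalence} is the upper estimate $\enorm{u_L^\star-v_L}\le\crel\,\etalg(v_L)$, and it is this estimate that I claim is interderivable with~\eqref{equation:contraction}.

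For the direction ``reliability $\Rightarrow$ contraction'' I would simply insert the upper estimate of~\eqref{equation:equivalence} into~\eqref{equation:pythagoras}:
\begin{align*}
	\enorm{u_L^\star - \Phi(v_L)}^2 \le \enorm{u_L^\star - v_L}^2 - \etalg(v_L)^2 \le \bigl(1 - \crel^{-2}\bigr)\,\enorm{u_L^\star - v_L}^2,
\end{align*}
which gives~\eqref{equation:contraction} with the admissible choice $\qctr = (1-\crel^{-2})^{1/2}\in(0,1)$, since $\crel>1$. For the converse ``contraction $\Rightarrow$ reliability'' I would run the same one-line computation backwards; here the only thing to verify is that~\eqref{equation:pythagoras} holds with \emph{equality} whenever no step size is truncated to $(d+1)^{-1}$, in which case~\eqref{equation:contraction} with factor $\qctr$ returns the upper estimate of~\eqref{equation:equivalence} with $\crel=(1-\qctr^2)^{-1/2}>1$. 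I expect no serious obstacle; the only mildly delicate point is this bookkeeping of when~\eqref{equation:pythagoras} degenerates into a strict inequality, i.e.\ the truncated-step-size case — which is precisely the situation that, by Remark~\ref{remark:iterate_construction}, does not occur in practice, while all other steps are elementary manipulations.

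Finally,~\eqref{equation:lower_bound} is immediate by chaining the two estimates of Theorem~\ref{theorem:solver}: first the contraction~\eqref{equation:contraction}, then the upper estimate of~\eqref{equation:equivalence},
\begin{align*}
	\enorm{u_L^\star - \Phi(v_L)} \le \qctr\,\enorm{u_L^\star - v_L} \le \qctr\,\crel\,\etalg(v_L).
\end{align*}
Combined with the lower estimate $\etalg(v_L)\le\enorm{u_L^\star-v_L}$, this shows that the built-in estimator $\etalg(v_L)$ controls, up to the constants, not only the pre-step algebraic error $\enorm{u_L^\star-v_L}$ but also the post-step error $\enorm{u_L^\star-\Phi(v_L)}$ — which is exactly what is needed to justify the adaptive stopping criterion in Step~{\rm(Iiii)} of Algorithm~\ref{algorithm:afem}.
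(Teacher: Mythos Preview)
Your direction ``reliability $\Rightarrow$ contraction'' and the derivation of~\eqref{equation:lower_bound} are correct and match the paper exactly.

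The gap is in the converse ``contraction $\Rightarrow$ reliability''. You cannot simply run~\eqref{equation:pythagoras} backwards: as an \emph{inequality} it only tells you $\etalg(v_L)^2 \le \enorm{u_L^\star-v_L}^2 - \enorm{u_L^\star-\Phi(v_L)}^2$, and contraction gives $(1-\qctr^2)\enorm{u_L^\star-v_L}^2 \le \enorm{u_L^\star-v_L}^2 - \enorm{u_L^\star-\Phi(v_L)}^2$. Both are \emph{lower} bounds on the same quantity, so combining them yields nothing. Your escape hatch---that~\eqref{equation:pythagoras} becomes an equality when no step size is truncated---does not save the argument: the corollary is a theoretical statement and must hold for \emph{every} $v_L$, including those for which some $s_\ell > d+1$. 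Dismissing this case as ``not occurring in practice'' is not a proof.

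The paper circumvents this by not using~\eqref{equation:pythagoras} at all for this direction. Instead it revisits the exact identity established in Step~1 of the proof of~\eqref{equation:pythagoras} (equation~\eqref{equation:int_res_LB_theorem}), which expresses $\enorm{u_L^\star-v_L}^2$ as $\enorm{u_L^\star-\Phi(v_L)}^2$ plus explicit correction terms. Dropping the single negative term $-\sum_{\ell=1}^{L-1}\enorm{\lambda_\ell\rho_\ell}^2$ and observing that the remaining positive terms are bounded by $2\,\etalg(v_L)^2$ gives
\[
\enorm{u_L^\star-v_L}^2 \le \enorm{u_L^\star-\Phi(v_L)}^2 + 2\,\etalg(v_L)^2 \le \qctr^2\,\enorm{u_L^\star-v_L}^2 + 2\,\etalg(v_L)^2,
\]
hence $\crel^2 = 2/(1-\qctr^2)$. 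The price for covering the truncated case is the harmless factor~$2$; the argument is robust precisely because it rests on an identity rather than an inequality.
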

	
	\begin{remark}
		We note that \eqref{equation:pythagoras} holds with equality whenever the step-size criterion $s_\ell \le d+1$ in Algorithm~\ref{algorithm:solver}(ii) are fulfilled and the construction is thus done by optimal-line search. In such a case, which was always satisfied in all our numerical tests, a Pythagoras identity in the spirit of~\cite[Theorem~4.7]{Mir_Pap_Voh_lam_21} yielding exact algebraic error decrease is obtained.
	\end{remark}
	
	\section{Application to adaptive FEM with inexact solver}
	\label{subsection:AFEM}
	
	
	Given a coarse mesh $\TT_0$, we
	use an adaptive finite element method (AFEM) to generate locally refined meshes $\{\TT_L\}_{L \in \N}$ tailored to the behavior of the sought solution.
	In the spirit of~\cite{GHPS_21}, Algorithm~\ref{algorithm:afem} presents such an approach with an adaptively stopped iterative solver,
	where Step~(Ii) exploits the built-in \textsl{a posteriori} estimator of the geometric multigrid solver from Section~\ref{section:Setting}.
	
	While we note that the present Algorithm~\ref{algorithm:afem} and the corresponding Theorem~\ref{theorem:optimal-cost} are restricted to fixed polynomial degree $p$, the inclusion of the proposed $hp$-robust iterative solver into the $hp$-adaptive FEM algorithm of~\cite{cnsv2017} remains for future research, since the mathematical understanding of $hp$-adaptive FEM is still widely open.
	
	\begin{algorithm}[AFEM with iterative solver]
		~\label{algorithm:afem}
		\newline
		{\bfseries Input:} Initial mesh $ \TT_0$, polynomial degree $p \in \N$, adaptivity parameters $0 < \theta \le 1$, $C_{\rm mark} \ge 1$, and $\mu > 0$, initial guess $u_0^0 := 0$.
		
		\noindent
		{\bfseries Adaptive loop:} repeat the following steps {\rm(I)--(III)} for all $L = 0, 1, 2, \dots$:
		\begin{enumerate}
			\item[\rm(I)] {\tt SOLVE \& ESTIMATE:} repeat the following steps {\rm(i)--(iii)} for all $k = 1, 2, 3,\dots$:
			\begin{enumerate}
				
				\item[\rm(i)] Do one step of the algebraic solver to obtain $u_L^k  \in \V_L^p$ and an associated \textsl{a~posteriori} estimator $\etalg(u_L^{k-1})$ for the algebraic error
				\begin{align*}
					[u_L^k, \etalg(u_L^{k-1})] \ := \ {\tt SOLVE}(u_L^{k-1}, \{\TT_\ell \}_{\ell = 0}^L,p).
				\end{align*}
				
				\item[\rm(ii)] Compute \textsl{a~posteriori} indicators for the elementwise discretization error
				\begin{align*}
					\{ \etdisc (T,u_L^k) \}_{T \in \TT_L} :=  {\tt ESTIMATE}(u_L^k, \TT_L).
				\end{align*}
				
				\item[\rm(iii)] If $\etalg(u_L^{k-1}) \le \mu \etdisc(u_L^k)$, terminate  the $k$-loop, set the index \(\kk[L] \coloneqq k\) and define $u_L := u_L^{\kk[L]}$.
				
			\end{enumerate}
			
			\item[\rm(II)] {\tt MARK:} Determine a set of marked elements $\MM_L \subseteq \TT_L$ of (up to the multiplicative constant $C_{\rm mark}$) minimal cardinality that satisfies
			\begin{align*}
				\theta \, \etdisc (u_L)^2 \le
				\sum_{T \in \MM_L} \etdisc (T,u_L)^2.
			\end{align*}
			\item[\rm(III)]  {\tt REFINE:} Generate the new mesh $\TT_{L+1} :=  {\tt REFINE} (\MM_L, \TT_L)$ and define $u_{L+1}^0 := u_L $.
		\end{enumerate}
		
		\noindent
		{\bfseries Output:}  Sequences of successively refined triangulations $\TT_L$, discrete approximations $u_L$ and corresponding error estimators $(\eta_L(u_L),\etalg(u_L))$.
	\end{algorithm}
	Mesh-refinement is steered by the discretization error estimator. For all $T \in \TT_\coarse$, let $\eta_\coarse(T; \cdot)\colon \V_\coarse^p \to \R_{\ge 0}$
	be the local contributions of the standard residual error estimator defined by
	\begin{align}\label{eq:residual-estimator}
		\eta_\coarse^2(T; v_\coarse) \coloneqq h_T^2 \Vert f + \div(\K \nabla v_\coarse - \f) \Vert_{T}^2 + h_T \Vert \jump{\K \nabla v_\coarse - \f} \cdot \boldsymbol{n} \Vert_{\partial T \cap \Omega}^{2},
	\end{align}
	where $\Vert \cdot \Vert_\omega$ denote the appropriate $L^2(\omega)$-norms.
	We define
	\begin{equation*}
		\eta_\coarse(\UU_\coarse; v_\coarse)
		:=
		\Bigl( \sum_{T \in \,\UU_\coarse} \eta_\coarse(T; v_\coarse)^2 \Bigr)^{1/2}
		\quad
		\text{for all } \UU_\coarse \subseteq \TT_\coarse \text{ and } v_\coarse \in \V_\coarse^{p}.
	\end{equation*}
	To abbreviate notation, let $\eta_\coarse(v_\coarse) := \eta_\coarse(\TT_\coarse; v_\coarse)$.
	
	
	One important consequence of Theorem~\ref{theorem:solver} is optimal convergence of Algorithm~\ref{algorithm:afem} with respect to computational complexity. To formulate this mathematically, we define the ordered set
	\begin{align*}
		\mathcal{Q} \coloneqq \set{(L, k) \in \N_0^2}{(L, k) \text{ is used as loop variable in Algorithm~\ref{algorithm:afem} and \(1 \le k \le \kk[L]\)}}.
	\end{align*}
	On $\QQ$, we define the ordering $\le$ by
	\begin{align*}
		(L', k') \le (L, k)
		\quad \Longleftrightarrow \quad
		u_{L'}^{k'} \text{ is computed earlier than or equal to $u_L^{k}$ in Algorithm~\ref{algorithm:afem}.}
	\end{align*}
	Furthermore, we introduce the total step counter $\vert \cdot, \cdot\vert$, defined for all $(L, k) \in \QQ$, by
	\begin{align*}
		\vert L, k \vert \coloneqq \# \set{(L', k') \in \QQ}{(L', k') \le (L, k)}.
	\end{align*}
	Before we state the theorem, we introduce the notion of approximation classes. For $\TT \in \T $ and $s>0$ define
	\begin{align}\label{eq:def_approx_class}
		\norm{u}{\A_s} := \sup_{N \in \N_0} \Big( \big( N+1 \big)^s \min_{\TT_{\rm opt} \in \T_N (\TT_0) } \big( \enorm{u^\star - u^\star_{\rm opt}} + \eta_{\rm opt} (u^\star_{\rm opt}) \big) \Big),
	\end{align}
	with Galerkin solution \(u_{\rm opt}^\exact\) and  estimator $\eta_{\rm opt}$ on the optimal triangulation $\TT_{\rm opt} \in \T_N (\TT_0)$, where $\T_N (\TT_0) \coloneqq \set{\TT_\coarse \in \T(\TT_0)}{\#\TT_\coarse -  \#\TT_0 \le N}$. By reliability (A3) of the estimator, see, e.g.,  \cite{Cars_Feis_Page_Prae_ax_adpt_14}, the sum on the right-hand side of \eqref{eq:def_approx_class} is equivalent to $\eta_{\rm opt} (u^\star_{\rm opt})$. If $\norm{u}{\A_s} < \infty$, then we say that rate $s$ is possible.
	In \cite{GHPS_21}, it is shown that in the case of a contractive solver, convergence rates with respect to degrees of freedom are equivalent to convergence rates with respect to computational complexity. We abbreviate with $\cost(L,k)$ the total costs of Algorithm~\ref{algorithm:afem} defined by
	\begin{align*}
		\cost(L,k) \coloneqq \sum_{\substack{(L',k') \in \QQ \\ (L',k') \le (L,k)}} \#\TT_{L'}.
	\end{align*}
	
	\begin{theorem}\label{theorem:optimal-cost}
		Let $\{\TT_L\}_{L \in \N_0}$ be the sequence generated by Algorithm~\ref{algorithm:afem} and define the quasi-error by
			\begin{equation*}
				\Delta_L^k \coloneqq \enorm{u^\exact - u_L^k} + \eta_L(u_L^k) \quad \text{for all } (L, k) \in \QQ.
		\end{equation*} 
		Then, for all parameters $0 < \theta \le 1$ and $\mu > 0$, it holds that
		\begin{equation}\label{eq:rate-equivalence}
			\sup \limits_{(L,k) \in \QQ} (\# \TT_L)^s \Delta_L^k \simeq \sup_{(L,k) \in \QQ}
			\cost(L,k)^{s} \, \Delta_L^{k} \quad \text{ and } \quad \Delta_L^k \to 0 \text{ as } |L, k| \to \infty.
		\end{equation}
		Furthermore, there exist $0 < \theta^\star \le 1$, and $\mu^\star > 0$ such that, for sufficiently small parameters $0 < \theta < \theta^\star$ and $0 < \mu/ \theta < \mu^\star$, and for all $s > 0$, it holds that
		\begin{align}\label{eq:theorem:aisfem:complexity}
			\copt \,\Vert u\Vert_{{\mathbb{A}_s}}
			\le
			\sup_{(L,k) \in \QQ} \cost(L,k)^{s} \, \Delta_L^{k}
			\le \Copt \, \max\{\Vert u \Vert_{{\mathbb{A}_s}}, \Delta_{0}^{0}\}.
		\end{align}
		The constants $\copt, \Copt > 0$ depend only on the polynomial degree $p$, the initial triangulation $\TT_0$, $\Lambda_{\max}/\Lambda_{\min}$, \(\max_{T  \in \TT_{L}}\Vert \div(\K) \Vert_{L^{\infty}(T)} / \Lambda_{\min}\), the rate $s$, the ratios $\theta / \theta^\exact$ and $\mu / (\theta \mu^\exact)$, and the properties of newest vertex bisection.
		In particular, this proves the equivalence
		\begin{align}\label{eq2:theorem:aisfem:complexity}
			\Vert u \Vert_{{\mathbb{A}_s}} < \infty
			\quad \Longleftrightarrow \quad
			\sup_{(L,k) \in \QQ}
			\cost(L,k)^{s} \, \Delta_L^{k} < \infty,
		\end{align}
		which proves optimal complexity of Algorithm~\ref{algorithm:afem}.
	\end{theorem}
	
	\begin{remark}
			We note that in \cite[Theorem~8]{GHPS_21}, the constant \(\copt > 0\) additionally depends on the stopping index \(\underline{L}\) in the case the algorithm terminates after a finite number of mesh levels \(\underline{L} < \infty\) or the estimator satisfies \(\eta_{L}(u_{L}) = 0\). The refined analysis in the recent work \cite{bhimps2023} removes this dependence.
	\end{remark}
	
	\begin{remark}
			We note that it is also possible to use the same stopping criterion for the algebraic solver as in \cite[Algorithm~2]{GHPS_21}. However, since the multigrid solver from Algorithm~\ref{algorithm:solver} has a built-in estimator for the algebraic error, we opt for its choice within Algorithm~\ref{algorithm:afem} instead. 
	\end{remark}
	
	\begin{proof}[Proof of Theorem~\ref{theorem:optimal-cost}]
		We show that Algorithm~\ref{algorithm:afem} satisfies the requirements of~\cite[Theorem~4 and Theorem~8]{GHPS_21}.
		First note that the standard residual error estimator from~\eqref{eq:residual-estimator} satisfies the axioms of adaptivity from \cite{Cars_Feis_Page_Prae_ax_adpt_14} and thus satisfies the assumptions (A1)--(A4) from~\cite[Theorem~8]{GHPS_21}.
		Furthermore, newest vertex bisection satisfies the assumptions (R1)--(R3) from~\cite[Section~2.2]{GHPS_21}. For the present setting, the conditions (C1) and (C2) from \cite[Section~2.5]{GHPS_21} coincide and are satisfied.
		
		Tracing the role of the stopping criterion for the case (C1) in the proof of \cite[Theorem~\revision{4}]{GHPS_21}, one sees that the stopping criterion needs to guarantee that, for all $(L,k) \in \QQ$,
		\begin{equation}\label{eq:stopping-requirement}
			\begin{split}
				\enorm{u_L^k - u_L^{k-1}}
				&\leq \lambda_1 \, \eta_L(u_L^k)
				\qquad \text{if } u_L^k = u_L,\\
				\eta_L(u_L^k)
				&\leq \lambda_2^{-1} \, \enorm{u_L^k - u_L^{k-1}}
				\qquad \text{else,}
			\end{split}
		\end{equation}
		for some $\lambda_1, \lambda_2>0$.
		The upper bound in~\eqref{equation:equivalence} in Theorem~\ref{theorem:solver} as well as contraction~\eqref{equation:contraction} show that, for all $(L,k) \in \QQ$, our stopping criterion in Algorithm~\ref{algorithm:afem}~\textrm{Step~(Iiii)} leads for \(u_L^k = u_L\) to
		\begin{equation*}
				\enorm{u_L^k - u_L^{k-1}} \eqreff{equation:contraction}\le (1+\qctr) \, \enorm{u_L^\exact - u_L^{k-1}} \eqreff{equation:equivalence}\le \crel \, (1+\qctr) \, \zeta_L(u_L^{k-1}) \le \mu \, \crel \, (1+\qctr) \, \eta_L(u_L^k).
		\end{equation*}
			For the remaining case, the contraction~\eqref{equation:contraction} leads to
			\begin{equation*}
				\enorm{u_L^\exact - u_L^k} \eqreff{equation:contraction}\le \qctr \, \enorm{u_L^\exact - u_L^{k-1}} \le 
				\qctr \, \enorm{u_L^\exact - u_L^k} + \qctr \, \enorm{u_L^k - u_L^{k-1}}.
			\end{equation*}
			This implies
			\begin{equation}\label{eq:error_control}
				\enorm{u_L^\exact - u_L^k} \le \frac{\qctr}{1-\qctr} \enorm{u_L^k - u_L^{k-1}}.
			\end{equation}
			The not met stopping criterion in Algorithm~\ref{algorithm:afem}\textrm{(Iiii)}, the lower bound in~\eqref{equation:equivalence}, and \eqref{eq:error_control} show
			\begin{align*}
				\eta_L(u_L^k) < \mu^{-1} \, \zeta_L(u_L^{k-1}) \eqreff{equation:equivalence}\le 
				\mu^{-1} \, \enorm{u_L^\exact - u_L^{k-1}} 
				&\le 
				\mu^{-1} \,  \bigl(\enorm{u_L^\exact - u_L^{k}} + \enorm{u_L^k - u_L^{k-1}}\bigr)
				\\
				&\eqreff*{eq:error_control}\le 
				\mu^{-1} \, \bigl(1+\frac{\qctr}{1-\qctr}\bigr) \,  \enorm{u_L^k - u_L^{k-1}}
				\\
				&= \mu^{-1} \, \bigl(\frac{1}{1-\qctr}\bigr) \,  \enorm{u_L^k - u_L^{k-1}}.
		\end{align*}
		Overall, \eqref{eq:stopping-requirement} is satisfied with 
			\[
			\lambda_1 = \Crel \, (1+\qctr) \, \mu \quad \text{and} \quad
			\lambda_2 = (1-\qctr) \, \mu,
			\]
			and \cite[Theorem~4]{GHPS_21} proves full linear convergence, so that, in particular, \eqref{eq:rate-equivalence} is fulfilled (see the proof of \cite[Theorem~8]{GHPS_21} or \cite[Corollary~4.2]{bhimps2023}).
		
		The lower bound in \eqref{eq:theorem:aisfem:complexity} follows as in \cite[Theorem~8]{GHPS_21} or \cite[Theorem~4.3]{bhimps2023}. For the upper bound in \eqref{eq:theorem:aisfem:complexity}, \cite[Theorem~8]{GHPS_21} requires that \[
			0 < \lambda_1 / \theta <  \lambda_{\mathrm{opt}} \coloneqq (1-\qctr) / (\qctr \, \Cstab)
			\] 
			and 
			\[
			0 < \theta^\prime \coloneqq \frac{\theta + \lambda_1 / \lambda_\mathrm{opt}}{1 - \lambda_1 / \lambda_{\mathrm{opt}}} < \theta_{\mathrm{opt}} \coloneqq (
			1+ \Cstab^2 \, \Cdrel^2)^{-1},
			\]
			where \(\Cstab\) is the stability constant from (A1) and \(\Cdrel\) is the constant from discrete reliability (A4); see, e.g., \cite{GHPS_21}. We define 
			\[
			\mu^\exact \coloneqq \frac{\lambda_{\mathrm{opt}}}{\Crel \,(1+\qctr)},\]
			and \(\mu / \theta < \mu^\exact\) thus implies \(\lambda_1 / \theta = \Crel \,(1+\qctr) \, \mu / \theta < \lambda_\mathrm{opt} \). Finally, we choose \(\theta^\exact\) such that any \(0 < \widetilde{\theta} \le \theta^\exact\) satisfies \(\frac{2 \, \widetilde{\theta}}{1 - \widetilde{\theta}} < \theta_{\mathrm{opt}}\). Then, \(0 < \theta < \theta^\exact\) yields 
			\(
			\theta^\prime = \frac{\theta + \lambda_1 / \lambda_\mathrm{opt}}{1 - \lambda_1 / \lambda_{\mathrm{opt}}} < \frac{2 \, \theta}{1-\theta} < \theta_{\mathrm{opt}}
			\)
			and
		optimal cost in Theorem~\ref{theorem:optimal-cost} follows directly from~\cite[Theorem~8]{GHPS_21}.
	\end{proof}
	
	\section{Numerical experiments}\label{section:numerical_experiments}
	This section investigates the numerical performance of the proposed multigrid solver of Algorithm~\ref{algorithm:solver} and the adaptive Algorithm~\ref{algorithm:afem}. The {\sc Matlab} implementation of the multigrid solver is embedded into the MooAFEM\footnote{available under \url{https://www.asc.tuwien.ac.at/praetorius/mooafem}.} framework from \cite{MooAFEM}. Throughout, we choose the marking parameter $\theta = 0.5$ in the adaptive Algorithm~\ref{algorithm:afem} and $\f = (0, 0)^\top$. We introduce the following test case:

	\begin{itemize}
		\item \emph{L-shaped domain.} Let $\Omega = (-1,1)^2 \setminus \big([0,1] \times [-1,0]\big)$ with right-hand side $f = 1$ and $\K = \boldsymbol{I}$.
	\end{itemize}
	
	\subsection{Contraction and performance of local multigrid solver}
	
	We confirm numerically our main results from Theorem~\ref{theorem:solver}. In order to study the algebraic solver and its built-in estimator with respect to different polynomial degrees,
	we take $\mu = 10^{-5}$ in Algorithm~\ref{algorithm:afem}, thus \emph{oversolving} the algebraic problem. Moreover, we stop the adaptive algorithm once the final mesh consists of \(10^{6}\) degrees of freedom. 
	Note that thanks to
	Corollary~\ref{corollary:solver} proving the equivalence of the reliability of the algebraic error estimator with the contraction of the algebraic solver, we indeed only need to investigate numerically the existence of the $p$-robust bound on the contraction of the solver.
	In Figure~\ref{figure:lshape_contraction} (left), we present the maximal contraction factors on each level \(L\) of the adaptive algorithm from Algorithm~\ref{algorithm:afem}. We see that the contraction factors are robust in the polynomial degree \(p\) with an upper bound of about \(0.7\) in all our experiments. In Figure~\ref{figure:lshape_contraction} (right),  we see that on a fixed number of levels (\(L\)=10) even for higher-order polynomials their behavior is clustered around similar values.  Moreover, from a purely solver-centric perspective, we see that the solver variant which employs higher-order smoothing also on the intermediate levels
	(and not only on the finest one)  as studied in~\cite{Mir_Pap_Voh_lam_21} only leads to slight improvements of the contraction constants. 
	Adapting the arguments of~\cite{Mir_Pap_Voh_lam_21}, this modified construction can be guaranteed to be contractive with $p$-robust, but linearly $L$-dependent contraction bound on the algebraic error.
	However, this degradation with increasing $L$ is not seen in practice, provided that the patchwise smoothing is done \emph{everywhere} for
	\emph{level $L=1$} (as new degrees of freedom are added on all patches when the polynomial degree is $p>1$) and \emph{local} patchwise smoothing is employed in the remaining levels.
	We present a comparison of the resulting contraction factors of this approach to Algorithm~\ref{algorithm:solver} for a fixed number of level ($L=10$) in Figure~\ref{figure:lshape_contraction}(right).
	
	\begin{figure}[htp!]
		\centering
		\subfloat{
			\includegraphics[scale=0.85]{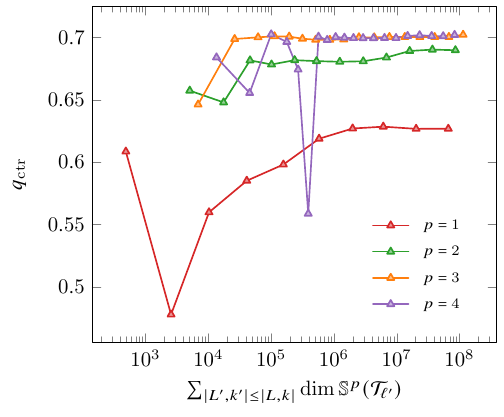}
		}
		\subfloat{
			\includegraphics[scale=0.85]{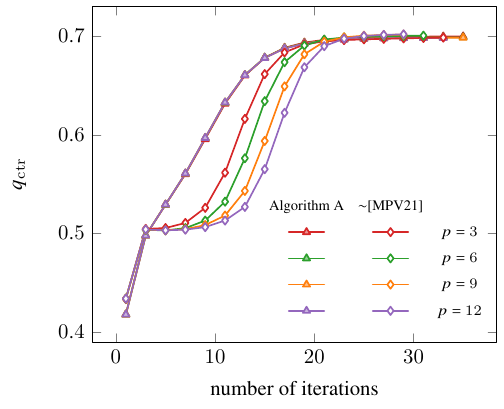}
		}
		\caption{\label{figure:lshape_contraction}\emph{Contraction of the algebraic solver.} History plot of the contraction factors $\qctr$ from~\eqref{equation:contraction} for various polynomial degrees $p$ with parameter \(\mu = 10^{-5}\) for the presented polynomial hierarchy from \eqref{equation:space_nestedness} in the adaptive algorithm from Algorithm~\ref{algorithm:afem} stopping once the final mesh consists of \(10^6\) degrees of freedom (left) and the comparison with polynomial hierarchy motivated by~\cite{Mir_Pap_Voh_lam_21} with localized smoothing for a fixed number of levels $L=10$ (right).}
	\end{figure}
	
	\subsection{Optimality of the adaptive algorithm}
	
	We take $\mu = 0.1$ in Algorithm~\ref{algorithm:afem} and study the decrease of the discretization error estimator $\eta_L(u_L)$, both in terms of number of degrees of freedom and timing. We remark that the error estimator $\eta_L(u_L)$ on the final iterates is equivalent to the quasi-error $\Delta_L$. After a pre-asymptotic phase, we see in Figure~\ref{figure:lshape_estimator} and \ref{figure:lshape_cost} for different polynomial degrees $p$ that the optimal convergence rate $-p/2$ is recovered both with respect to number of degrees of freedom and computational time, and the singularity at the reentrant corner $(0,0)$ is resolved through local mesh refinement. Furthermore, Figure~\ref{figure:lshape_timing} shows that the proposed multigrid solver behaves faster than the built-in direct solver ({\sc Matlab} backslash operator) concerning the time per dof. 
	The displayed timings include the setup of the linear system, the time for the solver module, computation of estimator, and mesh refinement. 
	Overall, the numerical experiments in Figure~\ref{figure:lshape_timing} validate the linear complexity of the suggested local multigrid solver from Algorithm~\ref{algorithm:solver}.
	
	\begin{figure}[htp!]
		\subfloat{
			\label{figure:lshape_estimator}
			\includegraphics[scale=0.85]{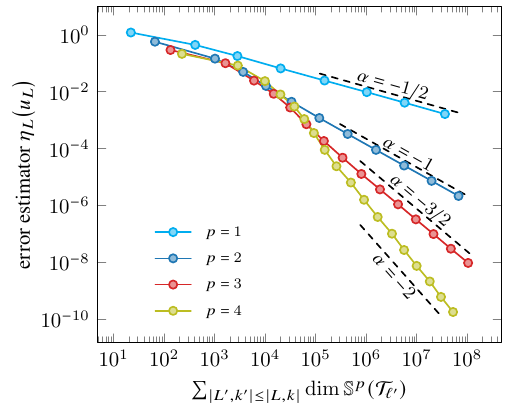}
		} \quad
		\subfloat{
			\label{figure:lshape_cost}
			\includegraphics[scale=0.85]{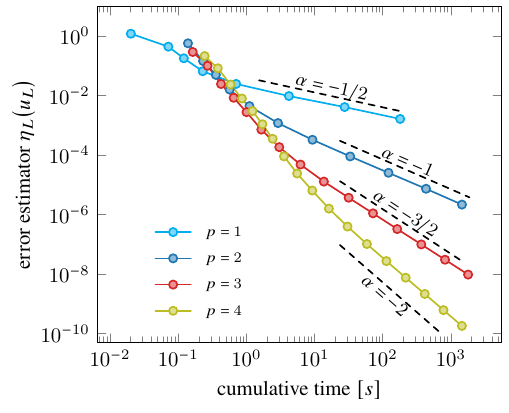}
		} \hfill
		\caption{\emph{Optimality of AFEM on L-shape.}\label{fig:optimality_afem} The convergence history plot of the discretization error estimator $\etdisc(u_L)$ with respect to the total computational cost (left) and the cumulative computational time (right).}
	\end{figure}
	
	\begin{figure}[htbp!]
		\centering
		\subfloat{\includegraphics[scale=0.85]{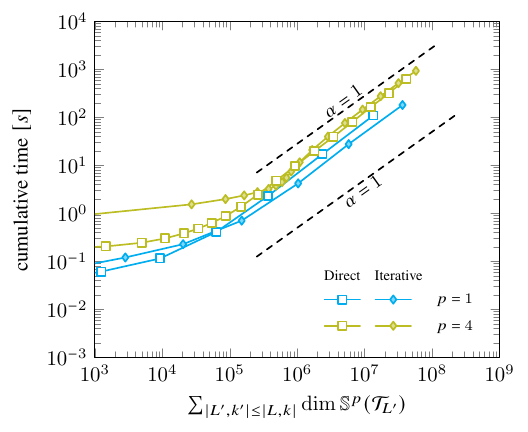}} \quad
		\subfloat{\includegraphics[scale=0.85]{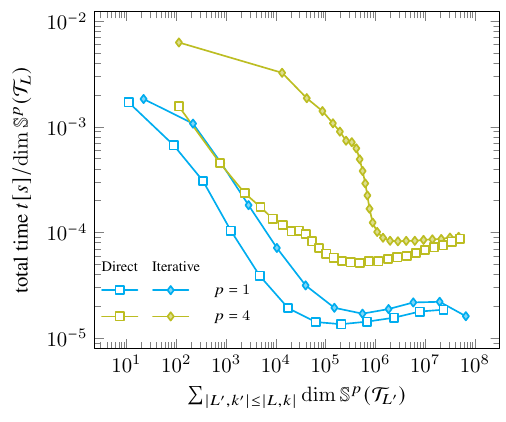}}
		
		\caption{\label{figure:lshape_timing}\emph{Optimality of the local multigrid solver.} History plot of the cumulative computational time and the relative computational time per degree of freedom for the polynomial degrees $p=1$ and $p=4$. We compare the overall time with the direct solve (square) to the overall time of the AFEM algorithm with the multigrid solver (diamond). In particular, the displayed times include setup, marking, and mesh refinement.}
	\end{figure}
	
	\subsection{Numerical performance and insights for jumping coefficients}
	
	We consider two additional test cases with jumps in the diffusion coefficient:
	
	\begin{itemize}
		\item \emph{Checkerboard.}
		Let $\Omega = (0,1)^2$ be the unit square and $\K$ the $2 \times 2$ checkerboard diffusion with values $1$ (white) and $10^k$ (grey) for fixed $k=1,2,3$, see Figure~\ref{fig:meshes} (left).
		\item \emph{Striped diffusion.} Let $\Omega = (0,1)^2$ be the unit square split into \(2^k\) stripes for \(k=1,2,3\). The value of $\K$ on the \(j\)-th stripe is \(10^{j-1}\) with \(j \in \{1, \ldots, 2^k\}\), see Figure~\ref{fig:meshes}(right).
	\end{itemize}
	
	In Table~\ref{table:convergence}, we see the optimal convergence of the discretization estimator with the optimal rate $-1/2$ for $p=1$ as well as $-1$ for $p=2$ for both diffusion coefficients regardless of the jump size. We stress that the discontinuity in the diffusion coefficient does not affect the optimality of the proposed adaptive algorithm and the iteration numbers remain uniformly bounded as displayed in Table~\ref{table:iterations}.

		Both test cases exhibit singularities due to jumps in the diffusion coefficient; however, the jump can be much higher for two neighboring elements in the checkerboard case. In this case, near the cross point $(1/2, 1/2)$, the jump is of order $10^k$ from one element to the next, which coincides with the jump from the highest to the lowest value of $\K$ on the \emph{whole} domain. For the striped test case, the jump between two neighboring elements belonging to different ``stripes'' is of order $10$, even if the \emph{global} jump in the diffusion (for non-neighboring elements) is of order $10^{2^k-1}$.
	
	This gives us the tools to observe numerically if the performance of our method only depends on \emph{local} jumps in the diffusion coefficient.
	
	\begin{figure}
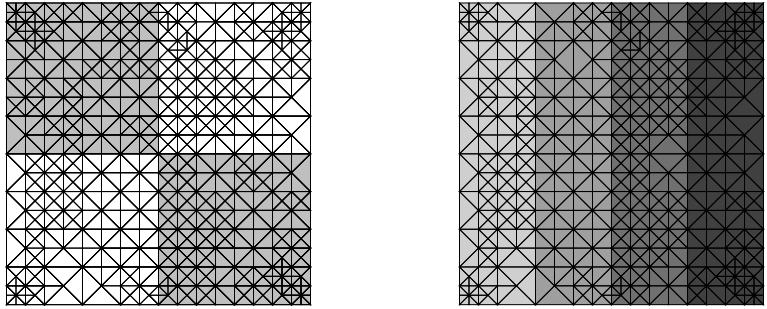

		\centering
		\subfloat{\begin{tikzpicture}[scale=4]
				\input{Numerical_experiments/meshes/Checkerboard_mesh.tex}
				\label{figure:checkerboard_mesh}
		\end{tikzpicture}}
		\hspace{10ex}
		\subfloat{\begin{tikzpicture}[scale=4]
				\input{Numerical_experiments/meshes/Stripe_mesh.tex}
				\label{figure:stripe_mesh}
		\end{tikzpicture}}
		\caption{\label{fig:meshes} Adaptively-refined meshes. Left: checkerboard diffusion with $k=1$, polynomial degree $p=1$ and $\# \TT_{8} = 603$. Right: stripe diffusion with $k=2$, $p=1$ and $\# \TT_{8} = 753$ (right).}
	\end{figure}
	
	\begin{figure}[htp!]
		\subfloat{
			\label{figure:checkerboard_estimator}
			\includegraphics[scale=0.85]{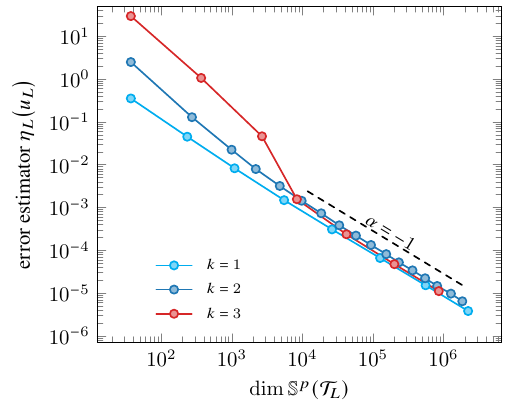}
		}
		\subfloat{
			\label{figure:stripe_estimator}
			\includegraphics[scale=0.85]{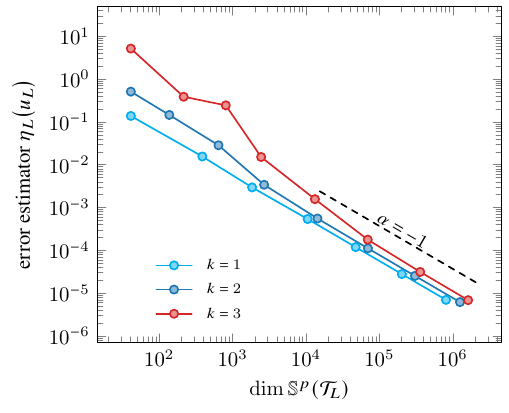}
		} \hfill
		\caption{\emph{Optimality of AFEM for jumping diffusion.} The convergence history plot of the discretization error estimator $\etdisc(u_L)$ for polynomial degree $p=2$ with respect to the total computational cost for the checkerboard diffusion (left) and the stripe diffusion (right).}
	\end{figure}
	
	\begin{table}[htp!]
		\centering
		\begin{tabular}{c|cc|cc|}
			\cline{2-5}
			& \multicolumn{2}{c|}{Checkerboard}  & \multicolumn{2}{c|}{Stripe}      \\ \cline{2-5}
			& \multicolumn{1}{c|}{$p=1$} & $p=2$ & \multicolumn{1}{c|}{$p=1$} & $p=2$ \\ \hline
			\multicolumn{1}{|c|}{$k=1$} & \multicolumn{1}{c|}{-0.4961}      &  -0.9877     & \multicolumn{1}{c|}{-0.4956}      &  -1.0116   \\ \hline
			\multicolumn{1}{|c|}{$k=2$} & \multicolumn{1}{c|}{-0.4960}      &   -0.9946    & \multicolumn{1}{c|}{-0.4969}      &  -0.9670   \\ \hline
			\multicolumn{1}{|c|}{$k=3$} & \multicolumn{1}{c|}{-0.4960}      &   -0.9826    & \multicolumn{1}{c|}{-0.5095}      &  -0.9766   \\ \hline
		\end{tabular}
		\caption{\label{table:convergence} Mean value of experimental convergence rates of the discretization error estimator $\eta_L(u_L)$ over the cumulative cost in a $\log\log$-plot for polynomial degrees $p=1,2$ and diffusion coefficient numbers $k=1, 2, 3$.}
	\end{table}
	
	\begin{table}[htp!]
		\centering
		\begin{tabular}{c|cc|cc|}
			\cline{2-5}
			& \multicolumn{2}{c|}{checkerboard}  & \multicolumn{2}{c|}{stripe}      \\ \cline{2-5}
			& \multicolumn{1}{c|}{$p=1$} & $p=2$ & \multicolumn{1}{c|}{$p=1$} & $p=2$ \\ \hline
			\multicolumn{1}{|c|}{$k=1$} & \multicolumn{1}{c|}{1}      &   1.0455 (mean),  2 (max)     & \multicolumn{1}{c|}{1}      &1.0455 (mean),  2 (max)   \\ \hline
			\multicolumn{1}{|c|}{$k=2$} & \multicolumn{1}{c|}{1}      &  2.3261 (mean),  5 (max)      & \multicolumn{1}{c|}{1}      &  1.0417 (mean),  2 (max)   \\ \hline
			\multicolumn{1}{|c|}{$k=3$} & \multicolumn{1}{c|}{1}      &   1.1818 (mean),  3 (max)     & \multicolumn{1}{c|}{1}      &  1.0833 (mean),  2 (max)   \\ \hline
		\end{tabular}
		\caption{\label{table:iterations} Mean and maximal iteration numbers for polynomial degrees $p=1,2$ and diffusion coefficient numbers $k=1, 2, 3$.}
	\end{table}
	
	\section{Proofs}\label{section:proofs}

		Below we present proofs of intermediate results leading to our main Theorem~\refeq{theorem:solver} of $L$- and $p$-robust contraction of the multigrid solver and the $L$- and $p$-robust two-sided bound of the algebraic error by the built-in \textsl{a~posteriori} estimator.
		We emphasize that this result improves the recent work~\cite{Mir_Pap_Voh_lam_21} by removing the $L$-dependence. 
		From an \emph{algorithmic} point of view, this is done by applying \emph{local} smoothing only on patches which change in the refinement step on lowest-order levels instead of on \emph{every} patch as was the case in~\cite{Mir_Pap_Voh_lam_21}. From an \emph{analysis} point of view, $L$-robustness is achieved thanks to the strengthened Cauchy--Schwarz inequality on bisection-generated meshes (Proposition~\ref{lem:str_CS_adapt}) building on the property that the levelwise overlap of the smoothed patches stays uniformly bounded.
		The next essential ingredient to prove the main result is an $hp$-stable decomposition on bisection generated meshes (Proposition~\ref{lemma:multilevel_decomposition}), then one combines the results carefully together with the simple but crucial observation of uniform boundedness in the number of overlapping patches for a fixed level (Lemma~\ref{lemma:norm_estimates}) and bounds on the step-sizes and the levelwise solver update (Lemma~\ref{lemma:loc_global_stepsizes}).

	\subsection{Auxiliary results}\label{subsection:auxiliary_result}
	We start with the simple observation that the number of overlapping patches is uniformly bounded.

	\begin{lemma}[Finite patch overlap]\label{lemma:norm_estimates}
		
		For all $T \in \TT_{\ell}$, there holds
		\begin{align} \label{equation:patch_overlap}
			\# (\VV_\ell \cap T) = d+1.
		\end{align}
		Therefore, for all $q \in \mathbb{N}$, it holds that
		\begin{align} \label{equation:norm_estimate_general}
			\enorm[\Big]{\sum \limits_{z \in \VV_\ell} v_{\ell, z}}^2 \le (d+1) \sum \limits_{z \in \VV_\ell} \enorm{v_{\ell, z}}^2 \quad \text{for all } v_{\ell, z} \in \V_{\ell,z}^q.
		\end{align}
		Similar arguments show that
		\begin{align}\label{equation:norm_estimate_special}
			\Big \Vert \nabla \sum \limits_{z \in \VV_\ell} v_{\ell, z} \Big \Vert^2 \le (d+1) \sum \limits_{z \in \VV_\ell} \Vert \nabla v_{\ell, z} \Vert^2 \quad \text{for all } v_{\ell, z} \in \V_{\ell,z}^q.
		\end{align}
	\end{lemma}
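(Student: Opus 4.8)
The plan is to establish the three claims successively: \eqref{equation:patch_overlap} is an immediate consequence of the simplicial structure, \eqref{equation:norm_estimate_general} follows from it by an elementwise Cauchy--Schwarz argument, and \eqref{equation:norm_estimate_special} is merely the specialization $\K = \boldsymbol{I}$. For \eqref{equation:patch_overlap}, I would simply observe that every $T \in \TT_\ell$ is a compact $d$-simplex, hence has exactly $d+1$ vertices, and all of these belong to $\VV_\ell$ since $\TT_\ell$ is conforming; conversely, any $z \in \VV_\ell \cap T$ is by definition one of the vertices of $T$. Hence $\vert \VV_\ell \cap T \vert = d+1$.

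For \eqref{equation:norm_estimate_general}, the crucial point to exploit is the \emph{locality} of the patch functions: any $v_{\ell,z} \in \V_{\ell,z}^q = \S_0^q(\TT_{\ell,z})$, extended by zero, lies in $H_0^1(\Omega)$ and is supported in $\overline{\omega_{\ell,z}}$, so that $v_{\ell,z}|_T \equiv 0$ unless $z \in T$. Fixing $T \in \TT_\ell$ and writing $\enorm{\cdot}_T$ for the local energy contribution, by \eqref{equation:patch_overlap} at most $d+1$ of the summands in $\sum_{z \in \VV_\ell} v_{\ell,z}$ are nonzero on $T$, and the elementary bound $\enorm{\sum_{i=1}^n w_i}_T^2 \le n \sum_{i=1}^n \enorm{w_i}_T^2$ with $n = d+1$ yields
\begin{align*}
	\enorm[\Big]{\sum_{z \in \VV_\ell} v_{\ell,z}}_T^2
	&= \enorm[\Big]{\sum_{z \in \VV_\ell \cap T} v_{\ell,z}}_T^2
	\le (d+1) \sum_{z \in \VV_\ell \cap T} \enorm{v_{\ell,z}}_T^2.
\end{align*}
Next I would sum over all $T \in \TT_\ell$, using the additivity $\enorm{\cdot}^2 = \sum_{T \in \TT_\ell} \enorm{\cdot}_T^2$ of the squared energy norm and interchanging the two sums:
\begin{align*}
	\enorm[\Big]{\sum_{z \in \VV_\ell} v_{\ell,z}}^2
	&\le (d+1) \sum_{T \in \TT_\ell} \sum_{z \in \VV_\ell \cap T} \enorm{v_{\ell,z}}_T^2
	= (d+1) \sum_{z \in \VV_\ell} \sum_{T \in \TT_{\ell,z}} \enorm{v_{\ell,z}}_T^2
	= (d+1) \sum_{z \in \VV_\ell} \enorm{v_{\ell,z}}^2,
\end{align*}
where the last identity again uses that $v_{\ell,z}$ vanishes outside $\omega_{\ell,z} = \mathrm{interior}(\bigcup_{T \in \TT_{\ell,z}} T)$.

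Finally, \eqref{equation:norm_estimate_special} requires no separate argument: it is exactly \eqref{equation:norm_estimate_general} in the case $\K = \boldsymbol{I}$, for which $\enorm{v}_T^2 = \norm{\nabla v}{T}^2$ on every element $T$, so the same chain of estimates applies verbatim.

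As the statement is essentially a combinatorial bookkeeping lemma, I do not anticipate a genuine obstacle. The only point needing a little care is to make precise that the zero-extensions of the patch functions indeed lie in $H_0^1(\Omega)$ with support in $\overline{\omega_{\ell,z}}$, which is what legitimizes both the elementwise decomposition of the energy norm and the re-indexing of the double sum; this is immediate from the definition $\V_{\ell,z}^q = \S_0^q(\TT_{\ell,z})$ together with the conformity and shape regularity of $\TT_\ell$.
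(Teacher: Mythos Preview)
Your proof is correct and follows essentially the same approach as the paper: both use the simplicial geometry for \eqref{equation:patch_overlap} and an elementwise discrete Cauchy--Schwarz argument exploiting the locality of the patch functions for \eqref{equation:norm_estimate_general}, with \eqref{equation:norm_estimate_special} as an immediate specialization. Your write-up is simply more detailed than the paper's terse two-line argument.
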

	
	\begin{proof}
		The overlap~\eqref{equation:patch_overlap} is clear from the geometry of the elements in the mesh.
		For all $\ell = 0, \dots, L$, the discrete Cauchy--Schwarz inequality and~\eqref{equation:patch_overlap} lead to
		\begin{align*}
			\enorm[\Big]{\sum \limits_{z \in \VV_\ell} v_{\ell, z}}^2 = \sum \limits_{T  \in \TT_\ell} \enorm[\Big]{\sum \limits_{z \in \VV_\ell \cap T} v_{\ell, z}}_T^2 \le (d+1) \sum \limits_{z \in \VV_\ell}  \enorm{v_{\ell, z}}^2.
		\end{align*}
		This concludes the proof.
	\end{proof}

	Next, we present bounds on the step-size and the levelwise solver update.
	\begin{lemma}\label{lemma:loc_global_stepsizes}
		For all $\ell \in \{1, \dots, L\}$, we have
		\begin{align}
			\enorm{\lambda_\ell \rho_\ell}^2
			\le \lambda_\ell \sum\limits_{z \in \VV_\ell^+} \enorm{\rho_{\ell,z}}^2.
			\label{equation:loc_global_stepsizes}
		\end{align}
		Moreover, we have upper and lower bounds for the step-sizes,
		\begin{align}
			\frac{1}{d+1}\le \lambda_\ell \le {d+1} \quad \text{for all } \ell = 1, \dots, L-1
			\quad \text{and} \quad
			\frac{1}{d+1}\le \lambda_L.
			\label{equation:bounds_stepsizes}
		\end{align}
	\end{lemma}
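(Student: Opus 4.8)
The plan is to isolate the single algebraic identity that drives both assertions and then to run through the short case distinction coming from the definition of $\lambda_\ell$ in Algorithm~\ref{algorithm:solver}(ii). First I would test each local residual problem~\eqref{equation:alg_step2} with its own solution, i.e.\ put $v_{\ell,z} = \rho_{\ell,z}$, to obtain $\enorm{\rho_{\ell,z}}^2 = R_L(\rho_{\ell,z}) - \edual{\sigma_{\ell-1}}{\rho_{\ell,z}}$ for every $z \in \mathcal{N}_\ell$. Summing over $z \in \mathcal{N}_\ell$ and using linearity of $R_L$ and of $\edual{\sigma_{\ell-1}}{\cdot}$ together with $\rho_\ell = \sum_{z \in \mathcal{N}_\ell}\rho_{\ell,z}$ yields
\begin{align*}
  \sum_{z \in \mathcal{N}_\ell}\enorm{\rho_{\ell,z}}^2 = R_L(\rho_\ell) - \edual{\sigma_{\ell-1}}{\rho_\ell}.
\end{align*}
If $\rho_\ell = 0$, the right-hand side vanishes, hence all $\rho_{\ell,z} = 0$; then both~\eqref{equation:loc_global_stepsizes} and the update of $\zeta_\ell$ are trivial and the precise value of $\lambda_\ell$ is immaterial, so I may assume $\rho_\ell \neq 0$ from here on. Comparing with the definition of the line-search step-size, the identity above then reads $s_\ell\,\enorm{\rho_\ell}^2 = \sum_{z\in\mathcal{N}_\ell}\enorm{\rho_{\ell,z}}^2$.

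For the step-size bounds~\eqref{equation:bounds_stepsizes} I would next invoke the finite-overlap estimate~\eqref{equation:norm_estimate_general} of Lemma~\ref{lemma:norm_estimates} (with $q=1$ on the intermediate levels and $q=p$ on level $L$, extending $\rho_{\ell,z}:=0$ for $z\in\VV_\ell\setminus\mathcal{N}_\ell$), which gives $\enorm{\rho_\ell}^2\le(d+1)\sum_{z\in\mathcal{N}_\ell}\enorm{\rho_{\ell,z}}^2$. Combined with the identity of the previous step this forces $s_\ell\ge(d+1)^{-1}$. The bounds on $\lambda_\ell$ now follow by inspecting its piecewise definition: for $\ell\le L-1$ one has $\lambda_\ell=s_\ell$ precisely when $s_\ell\le d+1$, so in that branch $(d+1)^{-1}\le\lambda_\ell\le d+1$, while in the other branch $\lambda_\ell=(d+1)^{-1}$ trivially satisfies the same two-sided bound; for $\ell=L$ either $p=1$ (the same dichotomy, hence $\lambda_L\ge(d+1)^{-1}$) or $p>1$, in which case $\lambda_L=s_L\ge(d+1)^{-1}$ directly and no upper bound is claimed.

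For~\eqref{equation:loc_global_stepsizes}, since $\lambda_\ell\ge0$ and $\enorm{\lambda_\ell\rho_\ell}^2=\lambda_\ell\cdot\lambda_\ell\enorm{\rho_\ell}^2$, it suffices to prove $\lambda_\ell\enorm{\rho_\ell}^2\le\sum_{z\in\mathcal{N}_\ell}\enorm{\rho_{\ell,z}}^2$ and then multiply through by $\lambda_\ell$. If $\lambda_\ell=s_\ell$ this holds with equality by the identity of the first step; if $\lambda_\ell=(d+1)^{-1}$ it is exactly the finite-overlap estimate. That completes the argument.

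I do not anticipate a genuine obstacle: the substance is the testing trick of the first step — a Galerkin-type identity saying that the line-search numerator equals the sum of the squared local corrections — together with the overlap bound already established in Lemma~\ref{lemma:norm_estimates}; everything else is bookkeeping over the branches in the definition of $\lambda_\ell$. The only points requiring a little care are the degenerate case $\rho_\ell=0$ treated above, and making sure that the summation set on the right of~\eqref{equation:loc_global_stepsizes} is the active patch set $\mathcal{N}_\ell$ actually used by the algorithm (which coincides with $\VV_\ell^+$ on the intermediate levels and, for $p=1$, on level $L$), consistently with the update $\zeta_\ell^2=\zeta_{\ell-1}^2+\lambda_\ell\sum_{z\in\mathcal{N}_\ell}\enorm{\rho_{\ell,z}}^2$.
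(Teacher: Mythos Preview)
Your proposal is correct and follows essentially the same approach as the paper: both proofs hinge on the identity $\sum_{z\in\mathcal{N}_\ell}\enorm{\rho_{\ell,z}}^2 = R_L(\rho_\ell)-\edual{\sigma_{\ell-1}}{\rho_\ell}$ obtained by testing~\eqref{equation:alg_step2} with $\rho_{\ell,z}$, together with the finite-overlap estimate~\eqref{equation:norm_estimate_general}, and then a case distinction on the two branches of the definition of $\lambda_\ell$. Your organization (establishing the identity once and reusing it for both claims) is slightly tidier, and you are more explicit about the degenerate case $\rho_\ell=0$ and the distinction $\mathcal{N}_\ell$ versus $\VV_\ell^+$ on level $L$, but the substance is identical.
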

	
	\begin{proof}
		\textbf{Step 1:} Proof of \eqref{equation:loc_global_stepsizes} if $\ell = L$ or $(R_L(\rho_\ell) - \edual{\sigma_{\ell-1}}{\rho_\ell})/\enorm{\rho_\ell}^2 \le d+1$ for $\ell \in \{1, \dots, L-1\}$. From Step (ii) of Algorithm \ref{algorithm:solver}, we have that $\lambda_\ell = (R_L(\rho_\ell) - \edual{\sigma_{\ell-1}}{\rho_\ell})/\enorm{\rho_\ell}^2$ and thus
		\begin{align*}
			\enorm{\lambda_\ell \rho_\ell}^2  \!
			= \lambda_\ell \, \cfrac{R_L(\rho_\ell) - \edual{\sigma_{\ell-1}}{\rho_\ell}}{\enorm{\rho_\ell}^2} \, \enorm{\rho_\ell}^2
			&= \lambda_\ell \sum\limits_{z \in \VV_\ell^+} \left(R_L(\rho_{\ell,z}) - \edual{\sigma_{\ell-1}}{\rho_{\ell,z}} \right) \\
			&\eqreff*{equation:alg_step2}=~\lambda_\ell\! \! \sum\limits_{z \in \VV_\ell^+} \! \enorm{\rho_{\ell,z}}^2.
		\end{align*}
		\textbf{Step 2:} Proof of \eqref{equation:loc_global_stepsizes} in the remaining cases. We use the finite overlap of the patches in Lemma~\ref{lemma:norm_estimates} to obtain
		\begin{align*}
			\enorm{\lambda_\ell \rho_\ell}^2
			= \frac{\lambda_\ell}{d+1}   \enorm{\rho_\ell}^2
			\stackrel{\eqref{equation:norm_estimate_general}}\le \frac{\lambda_\ell}{d+1} (d+1)  \sum\limits_{z \in \VV_\ell^+} \enorm{\rho_{\ell,z}}^2  =\lambda_\ell \sum\limits_{z \in \VV_\ell^+} \enorm{\rho_{\ell,z}}^2.
		\end{align*}
		\textbf{Step 3:} Proof of \eqref{equation:bounds_stepsizes}. For $\ell \in \{1, \dots, L-1\}$, the upper bound is guaranteed by definition of $\lambda_\ell$. The lower bound for $\ell \in \{1, \dots, L\}$ is trivial if $\lambda_\ell = 1/(d+1)$. Otherwise, it is a consequence of the finite patch overlap:
		\begin{align*}
			\lambda_\ell
			& = \frac{R_L(\rho_\ell) -
				\edual{\sigma_{\ell-1}}{\rho_\ell}}{\enorm{\rho_\ell}^2}
			\stackrel{\eqref{equation:alg_step2}}=
			\frac{ \sum_{z \in \VV_\ell^+} \enorm{\rho_{\ell,z}}^2}{\enorm{\rho_\ell}^2}
			\stackrel{\eqref{equation:norm_estimate_general}}\ge \frac{1}{d+1}.
		\end{align*}
		This concludes the proof.
	\end{proof}
	
	In the next two subsections, we combine existing results from the literature to obtain a multilevel $hp$-robust stable decomposition and a strengthened Cauchy--Schwarz inequality for our setting of bisection-generated meshes. These will be crucial for the proofs of Theorem~\ref{theorem:solver} and Corollary~\ref{corollary:solver} in Subsection~\ref{subsection:proof_main_result} below.
	
	\subsection{Multilevel \textit{hp}-robust stable decomposition on NVB-generated meshes} \label{subsection:SD}
	
	We start by recalling the one-level $p$-robust stable decomposition from Section~3.4 and Section~4.3 in \cite{Sch_Mel_Pec_Zag_08} for \(d=2\) and \(d=3\), respectively.
	\begin{lemma}[$p$-robust one level decomposition]
		\label{lemma:one_level_p}
		Let $v_L \in \V_L^p$. Then, there exists a decomposition
		\begin{align} \label{equation:one_level_p_decomposition}
			&v_L =  v_L^1 + \sum_{z \in \VV_{L}} v^p_{L,z} \quad \text{with } v_L^1 \in \V_L^1 \text{ and } v^p_{L,z} \in \V_{L,z}^p,
		\end{align}
		which is stable in the sense of
		\begin{align} \label{equation:one_level_p_stability}
			& \| \nabla v_L^1\|^2 + \sum_{z \in \VV_{L}} \| \nabla v^p_{L,z} \|^2  \le  \cps^2 \| \nabla v_L \|^2.
		\end{align}
		The constant $\cps$ depends only on the space dimension~$d$, the $\gamma$-shape regularity \eqref{equation:shape_regularity}, and the quasi-uniformity constant~$\cqu$ from \eqref{equation:coarse_quasiuniformity}.
	\end{lemma}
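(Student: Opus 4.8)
The statement to prove is Lemma~\ref{lemma:one_level_p}, the $p$-robust one-level decomposition. Since the excerpt explicitly attributes it to the proof of \cite[Theorem~2.1]{Sch_Mel_Pec_Zag_08}, my plan is to reconstruct that argument in the present notation rather than invent something new. The target is: given $v_L \in \V_L^p$, produce a coarse component $v_L^1 \in \V_L^1$ and patchwise higher-order components $v_{L,z}^p \in \V_{L,z}^p$ summing to $v_L$, with the $h^1$-stability bound \eqref{equation:one_level_p_stability}. I will first normalize the coefficient by reducing to $\K = \boldsymbol{I}$: the spectral bounds $\Lambda_{\min} \le \lambda_{\min}(\K) \le \lambda_{\max}(\K) \le \Lambda_{\max}$ give $\enorm{\cdot} \simeq \|\nabla \cdot\|$ with constants $\Lambda_{\min}, \Lambda_{\max}$, and the statement is phrased in $\|\nabla\cdot\|$ anyway, so $\cps$ need only depend on $d$, $\gamma$, $\cqu$ as claimed. (In fact the statement is entirely coefficient-free.)

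The core construction has two ingredients. First, a quasi-interpolation operator $I_L^1 : \V_L^p \to \V_L^1$ into the lowest-order space (a Scott--Zhang- or Clément-type operator adapted to the $\gamma$-shape-regular mesh $\TT_L$) which is $H^1$-stable, $\|\nabla I_L^1 v_L\| \lesssim \|\nabla v_L\|$, and locally $L^2$-approximating, $\|h_T^{-1}(v_L - I_L^1 v_L)\|_{T} \lesssim \|\nabla v_L\|_{\omega_T}$ on the element patch. Set $v_L^1 := I_L^1 v_L$ and $w_L := v_L - v_L^1 \in \V_L^p$; then $\|\nabla w_L\| \lesssim \|\nabla v_L\|$. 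Second, decompose the higher-order remainder $w_L$ across the vertex patches using the (Lagrange, lowest-order) partition of unity $\{\varphi_z\}_{z \in \VV_L}$ subordinate to $\{\omega_{L,z}\}$: write $w_L = \sum_{z \in \VV_L} \varphi_z w_L$. Each summand $v_{L,z}^p := \varphi_z w_L$ has support in $\overline{\omega_{L,z}}$; it is piecewise polynomial of degree $\le p+1$, not $\le p$, and it need not vanish where $\varphi_z$ does not --- so it is \emph{not} directly in $\V_{L,z}^p = \S_0^p(\TT_{L,z})$. This is precisely the point where the $p$-robust argument of \cite{Sch_Mel_Pec_Zag_08} does real work: one invokes a local polynomial-preserving operator on each patch (an $hp$ Scott--Zhang / bubble-corrected projection, or the stable polynomial extension results of Demkowicz--Gopalakrishnan--Schöberl used there) to replace $\varphi_z w_L$ by a genuine element $v_{L,z}^p \in \V_{L,z}^p$ with the same "trace contribution," while controlling $\|\nabla v_{L,z}^p\|$ by $\|\nabla w_L\|_{\omega_{L,z}}$ up to a constant \emph{independent of $p$}. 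Summing the squared local bounds and using the finite overlap \eqref{equation:patch_overlap} of the patches (each element lies in exactly $d+1$ patches) gives $\sum_z \|\nabla v_{L,z}^p\|^2 \lesssim \|\nabla w_L\|^2 \lesssim \|\nabla v_L\|^2$, and together with $\|\nabla v_L^1\|^2 \lesssim \|\nabla v_L\|^2$ this is \eqref{equation:one_level_p_stability}.

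The step I expect to be the main obstacle --- and the only genuinely nontrivial one --- is the $p$-robustness of the local patch correction: showing that the constant relating $\|\nabla v_{L,z}^p\|$ to $\|\nabla w_L\|_{\omega_{L,z}}$ does not degrade as $p \to \infty$. The naive partition-of-unity estimate loses a factor of $p$ (through $\|\nabla \varphi_z\|_{L^\infty}$ times an inverse inequality, or through a non-$p$-robust local projector), which is exactly what must be avoided. The resolution, following \cite{Sch_Mel_Pec_Zag_08}, is to avoid multiplying by $\varphi_z$ on the whole high-order function and instead use a stable polynomial \emph{extension} operator that reconstructs a patchwise polynomial from its restriction to a single vertex star with $p$-uniform norm control; this rests on the one-dimensional result that polynomials can be lifted from an edge to a triangle stably in $H^1$ with a $p$-independent constant, bootstrapped to $2$D and $3$D. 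I will cite this as the key input from \cite{Sch_Mel_Pec_Zag_08} rather than re-deriving it. Everything else --- the coarse quasi-interpolation bound, the finite-overlap summation, and the reduction from $\enorm{\cdot}$ to $\|\nabla\cdot\|$ --- is standard and $h$-robust by $\gamma$-shape regularity and $p$-independent by construction, so the dependence of $\cps$ on only $d$, $\gamma$, and $\cqu$ follows.
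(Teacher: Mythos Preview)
The paper does not prove this lemma at all: it merely cites it as a known result from \cite[Proof of Theorem~2.1]{Sch_Mel_Pec_Zag_08} and moves on. Your proposal correctly identifies this and offers a faithful sketch of the cited argument (lowest-order quasi-interpolant for $v_L^1$, patchwise decomposition of the remainder, with $p$-robustness coming from the stable polynomial extension operators of Demkowicz--Gopalakrishnan--Sch\"oberl rather than a naive partition-of-unity cutoff), so there is nothing to compare---you have supplied more than the paper does.
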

	
	Similarly, we recall the \emph{local} multilevel decomposition for piecewise affine functions proven in~\cite[Lemma~3.1]{Wu_Zheng_Multigrid}. In order to present this stable decomposition in a form that is more suitable for our forthcoming analysis, we add a short proof for completeness.
	
	\begin{lemma}[$h$-robust local multilevel decomposition for lowest-order functions]\label{lemma:local_multilevel_decomposition}
		Let $v_L^1 \in \V_L^1$. Then, there exists a decomposition
		\begin{align}\label{equation:multilevel_local_lowest_order_decomposition}
			&v_L^1 = \sum_{\ell=0}^L \sum_{z \in \VV_{\ell}^+} v^1_{\ell,z} \quad \text{with } v^1_{\ell,z} \in \V^1_{\ell,z},
		\end{align}
		which is stable in the sense of
		\begin{align} \label{equation:multilevel_local_lowest_order_stability}
			&\sum_{\ell=0}^{L} \sum_{z \in \VV_{\ell}^+} \| \nabla v^1_{\ell,z} \|^2  \le  \cs^2 \| \nabla v_L^1 \|^2.
		\end{align}
		The constant $\cs$ depends only on the space dimension~$d$, the $\gamma$-shape regularity \eqref{equation:shape_regularity}, and the quasi-uniformity constant~$\cqu$ from \eqref{equation:coarse_quasiuniformity}.
	\end{lemma}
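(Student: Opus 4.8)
The statement is essentially a reformulation of \cite[Lemma~3.1]{Wu_Zheng_Multigrid}; the only work is to rewrite the decomposition so that the summation index $z$ runs precisely over the sets $\VV_\ell^+$ of \emph{new or shrunken-patch} vertices rather than over all vertices of $\TT_\ell$, and to verify that this restricted version is still stable with an $h$-robust constant. The plan is to start from the decomposition of \cite{Wu_Zheng_Multigrid}, which already delivers a splitting $v_L^1 = \sum_{\ell=0}^L \sum_{z \in \widetilde\VV_\ell} v^1_{\ell,z}$ with $v^1_{\ell,z} \in \V^1_{\ell,z}$ and $\sum_{\ell,z} \|\nabla v^1_{\ell,z}\|^2 \le \cs^2 \|\nabla v_L^1\|^2$, where $\widetilde\VV_\ell$ is the set of vertices local to the refinement at level $\ell$ (in particular $\widetilde\VV_\ell \subseteq \VV_\ell^+$, or can be arranged so, since only patches that change are touched). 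If the cited result is stated with $\widetilde\VV_\ell = \VV_\ell^+$ directly, there is nothing further to do; if it uses a slightly smaller or differently described index set, I would reconcile the two by observing that adding zero contributions $v^1_{\ell,z} := 0$ for the remaining $z \in \VV_\ell^+$ changes neither the decomposition nor the stability bound.

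First I would recall the construction underlying \cite{Wu_Zheng_Multigrid}: write $v_L^1 = \sum_{\ell=0}^L (Q_\ell - Q_{\ell-1}) v_L^1$ with $Q_\ell$ a suitable (quasi-)interpolation onto $\V_\ell^1$, $Q_{-1} := 0$, and then expand each difference $(Q_\ell - Q_{\ell-1}) v_L^1 \in \V_\ell^1$ in the nodal basis. The key geometric fact from NVB on locally refined meshes is that $(Q_\ell - Q_{\ell-1})v_L^1$ is supported only on the region modified at step $\ell$, so its nodal expansion involves only hat functions at vertices $z \in \VV_\ell^+$; this is exactly what lets us restrict the inner sum. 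Setting $v^1_{\ell,z} := \big((Q_\ell - Q_{\ell-1})v_L^1\big)(z)\,\varphi_{\ell,z}$ for $z \in \VV_\ell^+$, where $\varphi_{\ell,z}$ is the level-$\ell$ hat function, gives \eqref{equation:multilevel_local_lowest_order_decomposition}, and $v^1_{\ell,z} \in \S^1_0(\TT_{\ell,z}) = \V^1_{\ell,z}$ since $\varphi_{\ell,z}$ vanishes outside $\omega_{\ell,z}$ and on $\partial\Omega$ (using that $v_L^1$, hence $Q_\ell v_L^1$, is zero on $\partial\Omega$, so boundary nodes contribute nothing).

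For the stability estimate \eqref{equation:multilevel_local_lowest_order_stability}, I would use $\gamma$-shape regularity to pass between the $L^2$-norm of a single hat function scaled by its nodal value and the local energy, namely $\|\nabla v^1_{\ell,z}\|^2 \simeq h_{\ell,z}^{d-2}\,|v_L^1(z) - \ldots|^2$ up to the $d$-dependent shape-regularity constant, then sum the local stability estimate from \cite{Wu_Zheng_Multigrid} that controls $\sum_{\ell}\sum_{z\in\VV_\ell^+} \|\nabla v^1_{\ell,z}\|^2$ by $\|\nabla v_L^1\|^2$. The constant $\cs$ thus inherits its dependence on $d$, on $\gamma$ from \eqref{equation:shape_regularity}, and on $\cqu$ from \eqref{equation:coarse_quasiuniformity} (the latter entering through the coarse-level interpolation estimate on $\TT_0$), and in particular is independent of $L$ and of the intermediate meshes.

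\textbf{Main obstacle.} The only genuine subtlety is bookkeeping: making sure the support-localization claim—that $(Q_\ell - Q_{\ell-1})v_L^1$ touches only vertices in $\VV_\ell^+$, including the \emph{old} vertices whose patch \emph{shrank}—matches the precise definition of $\VV_\ell^+$ used here. A pre-existing vertex $z \in \VV_\ell \cap \VV_{\ell-1}$ with $\omega_{\ell,z} \ne \omega_{\ell-1,z}$ must be included because, although $z$ is an old node, the hat function $\varphi_{\ell,z}$ differs from $\varphi_{\ell-1,z}$ and the interpolation difference can be nonzero there; conversely a vertex whose patch is unchanged contributes zero. I expect this to be routine but it is the point where the short proof "for completeness" actually earns its keep, so I would spell it out carefully and otherwise cite \cite{Wu_Zheng_Multigrid} for the quantitative stability bound.
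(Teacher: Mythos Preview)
Your proposal is correct and follows essentially the same approach as the paper: write $v_L^1 = \sum_\ell (\Pi_\ell - \Pi_{\ell-1})v_L^1$ with the projection $\Pi_\ell$ from \cite{Wu_Zheng_Multigrid}, invoke \cite[Lemma~3.1]{Wu_Zheng_Multigrid} to localize each difference to $\mathrm{span}\{\varphi_{\ell,z}:z\in\VV_\ell^+\}$, set $v^1_{\ell,z}:=w_\ell^1(z)\varphi_{\ell,z}$, and for stability pass via an inverse inequality to $\sum_{\ell,z} h_{\ell,z}^{-2}\|w_\ell^1\|_{\omega_{\ell,z}}^2$ and cite \cite[Lemma~3.7]{Wu_Zheng_Multigrid}. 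Your ``main obstacle'' discussion about the bookkeeping of $\VV_\ell^+$ is spot on and is precisely what the paper sidesteps by citing \cite[Lemma~3.1]{Wu_Zheng_Multigrid} directly.
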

	
	\begin{proof}
		Let $v_L^1 \in \V_L^1$. Define $w_\ell^1 \coloneqq (\Pi_\ell - \Pi_{\ell-1}) v_L^1$ for $ \ell \in \{0, \dots, L \}$, where $\Pi_{-1} \coloneqq 0$ and $\Pi_\ell$ is the projection to $\V_{\ell}^1$ from~\cite[Section~3]{Wu_Zheng_Multigrid}.
		From~\cite[Lemma~3.1]{Wu_Zheng_Multigrid}, it holds that $w_\ell^1 \in {\rm span} \set{\varphi_{\ell,z}}{z \in \VV_\ell^+ }$ with $\varphi_{\ell, z}$ being the $\S^1(\TT_\ell)$ hat-function at vertex $z\in \VV_{\ell}$. We decompose $w_\ell^1 = \sum_{z \in \VV_{\ell}^+} v_{\ell, z}^1$ with $v_{\ell,z}^1 := w_\ell^1 (z) \varphi_{\ell,z} \in \V^1_{\ell, z}$ and thus obtain
		\begin{align}\label{equation:one_level_p1_decomposition}
			v_L^1 =  \sum_{\ell=0}^{L} (\Pi_\ell - \Pi_{\ell-1}) v_L^1 =  \sum_{\ell=0}^{L} w_\ell^1 =  \sum_{\ell=0}^{L} \sum_{z \in \VV_{\ell}^+}  v^1_{\ell,z}.
		\end{align}
		For fixed $\ell$ and $z \in \VV_{\ell}^+$, the equivalence of norms on finite-dimensional spaces proves
		\begin{align}\label{equation:loc_nd_L2}
			\begin{split}
				\| v^1_{\ell,z}\|_{\omega_{\ell,z}}
				&\le \sum_{T \in {\TT_{\ell,z}}}  \| w^1_{\ell} (z) \varphi_{\ell,z} \|_T \\
				&\le \! \! \sum_{T \in {\TT_{\ell,z}}} \|w_{\ell}^1 \|_{L^{\infty}(T)} |T|^{1/2}
				\lesssim \sum_{T \in {\TT_{\ell,z}}} \|w_{\ell}^1 \|_T \simeq \|w_{\ell}^1 \|_{\omega_{\ell,z}},
			\end{split}
		\end{align}
		where the hidden constants depend only on $\gamma$-shape regularity \eqref{equation:shape_regularity}.
		To obtain stability of the decomposition~\eqref{equation:one_level_p1_decomposition}, we use an inverse inequality on the patches
		and the stability proved in~\cite[Lemma~3.7]{Wu_Zheng_Multigrid}:
		\begin{align*}
			\sum_{\ell=0}^{L} \sum_{z \in \VV_{\ell}^+} \| \nabla v^1_{\ell,z} \|^2
			&\lesssim  \sum_{\ell=0}^{L} \sum_{z \in \VV_{\ell}^+} h_{\ell,z}^{-2} \| v^1_{\ell,z} \|^2_{\omega_{\ell,z}}
			\stackrel{\eqref{equation:loc_nd_L2}}\lesssim  \sum_{\ell=0}^{L} \sum_{z \in \VV_{\ell}^+} h_{\ell,z}^{-2} \| w^1_\ell \|^2_{\omega_{\ell,z}}  \\
			&= \sum_{\ell=0}^{L} \sum_{z \in \VV_{\ell}^+} h_{\ell,z}^{-2} \|  (\Pi_\ell - \Pi_{\ell-1}) v_L^1 \|^2_{\omega_{\ell,z}}
			\stackrel{\text{\cite{Wu_Zheng_Multigrid}}}\lesssim  \| \nabla v_L^1 \|^2.
		\end{align*}
		This concludes the proof.
	\end{proof}

	The combination of the two previous lemmas, done similarly in~\cite[Proposition 7.6]{Mir_Pap_Voh_19} for a \emph{non-local} and hence not $h$-robust solver, leads to the following $hp$-robust decomposition.
	\begin{proposition}[$hp$-robust local multilevel decomposition]\label{lemma:multilevel_decomposition}
		Let $v_L \in \V_L^p$. Then, there exist $v_0 \in \V_0^1, \ v_{\ell,z} \in \V_{\ell,z}^1$, and $v_{L,z} \in \V_{L,z}^p$ such that
		\begin{align} \label{equation:multilevel_local_decomposition}
			&v_L = v_0 + \sum_{\ell=1}^{L-1} \sum_{z \in \VV_{\ell}^+}  v_{\ell,z} + \sum_{z \in \VV_{L}}  v_{L,z}.
		\end{align}
		and this decomposition is stable in the sense of
		\begin{align} \label{equation:multilevel_local_stability}
			&\enorm{v_0}^2 +  \sum_{\ell=1}^{L-1} \sum_{z \in \VV_{\ell}^+} \enorm{v_{\ell,z}}^2
			+ \sum_{z \in \VV_L} \enorm{v_{L,z}}^2 \le  \csdsqu \enorm{v_L}^2.
		\end{align}
		The constant $ \csd \ge 1 $ depends only on the space dimension~$d$, $\gamma$-shape regularity \eqref{equation:shape_regularity}, the quasi-uniformity constant~$\cqu$ from \eqref{equation:coarse_quasiuniformity}, and the ratio of $\Lambda_{\max}$ and $\Lambda_{\min}$.
	\end{proposition}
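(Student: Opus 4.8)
The plan is to concatenate the two decompositions recalled above: first peel off the higher-order content on the finest level by means of Lemma~\ref{lemma:one_level_p}, then split the remaining lowest-order function with the local multilevel decomposition of Lemma~\ref{lemma:local_multilevel_decomposition}, and finally merge the two sets of finest-level contributions using $\V_{L,z}^1 \subseteq \V_{L,z}^p$.

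Concretely, I would first apply Lemma~\ref{lemma:one_level_p} to $v_L \in \V_L^p$ to obtain $v_L = v_L^1 + \sum_{z \in \VV_L} v_{L,z}^p$ with $v_L^1 \in \V_L^1$, $v_{L,z}^p \in \V_{L,z}^p$, and $\|\nabla v_L^1\|^2 + \sum_{z \in \VV_L}\|\nabla v_{L,z}^p\|^2 \le \cps^2\|\nabla v_L\|^2$. Next, I would apply Lemma~\ref{lemma:local_multilevel_decomposition} to $v_L^1 \in \V_L^1$ to get $v_L^1 = \sum_{\ell=0}^{L}\sum_{z \in \VV_\ell^+} v_{\ell,z}^1$ with $v_{\ell,z}^1 \in \V_{\ell,z}^1$ and $\sum_{\ell=0}^{L}\sum_{z \in \VV_\ell^+}\|\nabla v_{\ell,z}^1\|^2 \le \cs^2\|\nabla v_L^1\|^2$. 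Then I would set $v_0 := \sum_{z \in \VV_0^+} v_{0,z}^1 \in \V_0^1$ (using $\VV_0^+ = \VV_0$), keep $v_{\ell,z} := v_{\ell,z}^1 \in \V_{\ell,z}^1$ for $1 \le \ell \le L-1$ and $z \in \VV_\ell^+$, and on the finest level set $v_{L,z} := v_{L,z}^p + v_{L,z}^1 \in \V_{L,z}^p$ for $z \in \VV_L$, with the convention $v_{L,z}^1 := 0$ whenever $z \in \VV_L \setminus \VV_L^+$. With these choices, \eqref{equation:multilevel_local_decomposition} holds by construction, since the right-hand side collapses to $\sum_{\ell=0}^{L}\sum_{z \in \VV_\ell^+} v_{\ell,z}^1 + \sum_{z \in \VV_L} v_{L,z}^p = v_L^1 + \sum_{z \in \VV_L} v_{L,z}^p = v_L$.

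For the stability bound \eqref{equation:multilevel_local_stability} I would work throughout with the norm equivalence $\Lambda_{\min}\|\nabla w\|^2 \le \enorm{w}^2 \le \Lambda_{\max}\|\nabla w\|^2$. On level $0$, the finite overlap \eqref{equation:norm_estimate_general} of Lemma~\ref{lemma:norm_estimates} gives $\enorm{v_0}^2 \le (d+1)\sum_{z \in \VV_0}\enorm{v_{0,z}^1}^2 \le (d+1)\Lambda_{\max}\sum_{z \in \VV_0}\|\nabla v_{0,z}^1\|^2$; on the intermediate levels $\sum_{z \in \VV_\ell^+}\enorm{v_{\ell,z}}^2 \le \Lambda_{\max}\sum_{z \in \VV_\ell^+}\|\nabla v_{\ell,z}^1\|^2$; and on the finest level $\sum_{z \in \VV_L}\enorm{v_{L,z}}^2 \le 2\Lambda_{\max}(\sum_{z \in \VV_L}\|\nabla v_{L,z}^p\|^2 + \sum_{z \in \VV_L^+}\|\nabla v_{L,z}^1\|^2)$. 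Collecting these, the left-hand side of \eqref{equation:multilevel_local_stability} is bounded by $C_d\,\Lambda_{\max}(\sum_{z \in \VV_L}\|\nabla v_{L,z}^p\|^2 + \sum_{\ell=0}^{L}\sum_{z \in \VV_\ell^+}\|\nabla v_{\ell,z}^1\|^2)$ with $C_d$ depending only on $d$. Applying, in this order, the stability of Lemma~\ref{lemma:local_multilevel_decomposition} and then of Lemma~\ref{lemma:one_level_p} bounds this by $C_d\,\Lambda_{\max}\max\{1,\cs^2\}\,\cps^2\|\nabla v_L\|^2 \le C_d\,(\Lambda_{\max}/\Lambda_{\min})\max\{1,\cs^2\}\,\cps^2\,\enorm{v_L}^2$, which yields \eqref{equation:multilevel_local_stability} with $\csdsqu$ — enlarged if necessary so that $\csd \ge 1$ — depending only on $d$, the $\gamma$-shape regularity \eqref{equation:shape_regularity}, the quasi-uniformity constant $\cqu$, and $\Lambda_{\max}/\Lambda_{\min}$.

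The only genuinely delicate point is the finest level: one must absorb the level-$L$ lowest-order pieces $v_{L,z}^1$, a priori defined only for $z \in \VV_L^+$, into the higher-order pieces $v_{L,z}^p$ indexed over all of $\VV_L$, which is precisely what the inclusion $\V_{L,z}^1 \subseteq \V_{L,z}^p$ together with the extension-by-zero convention makes possible. Everything else amounts to bookkeeping of the index sets $\VV_\ell^+$ and a routine chaining of the two stability estimates with the norm equivalence and the finite patch overlap.
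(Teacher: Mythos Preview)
Your proof is correct and follows essentially the same approach as the paper: apply Lemma~\ref{lemma:one_level_p}, then Lemma~\ref{lemma:local_multilevel_decomposition} to the lowest-order part, merge the level-$L$ contributions via $\V_{L,z}^1 \subseteq \V_{L,z}^p$, and chain the two stability bounds together with the finite patch overlap and the norm equivalence $\Lambda_{\min}\|\nabla\cdot\|^2 \le \enorm{\cdot}^2 \le \Lambda_{\max}\|\nabla\cdot\|^2$. The only cosmetic differences are that the paper uses the factor $d+1$ (via~\eqref{equation:norm_estimate_special}) rather than $2$ when splitting $v_{L,z}^1 + v_{L,z}^p$ on the finest level, and handles the passage from $\|\nabla\cdot\|$ to $\enorm{\cdot}$ in a single sentence at the end rather than carrying $\Lambda_{\max}$ through the estimates.
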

	\begin{proof}
		Let $v_L \in \Vp_L$. We begin with the decomposition of $v_L$ by \eqref{equation:one_level_p_decomposition}, then continue with the further decomposition of the lowest-order contribution $v_L^1$ in a multilevel way \eqref{equation:multilevel_local_lowest_order_decomposition}:
		\begin{align*}
			v_L
			\stackrel{\eqref{equation:one_level_p_decomposition}}=  v_L^1 + \sum_{z \in \VV_{L}} v^p_{L,z}
			&\stackrel{\eqref{equation:multilevel_local_lowest_order_decomposition}}=
			\sum_{\ell=0}^L \sum_{z \in \VV_{\ell}^+} v^1_{\ell,z} + \sum_{z \in \VV_{L}} v^p_{L,z}\\
			&= \sum_{z \in \VV_0} v^1_{0,z} + \sum_{\ell=1}^{L-1} \sum_{z \in \VV_{\ell}^{+}} v^1_{\ell,z}
			+ \sum_{z \in \VV_L^+} v^1_{L,z}
			+ \sum_{z \in \VV_{L}} v_{L,z}^p.
		\end{align*}
		By defining $v_0 := \sum_{z \in \VV_0} v^1_{0,z} \in \V_0^1$, $v_{\ell,z} := v^1_{\ell,z} \in \V_{\ell,z}^1$ for $z \in \VV_{\ell}^+$ and $  1 \le \ell \le L-1$, and $ v_{L,z} := v^1_{L,z}+ v^p_{L,z} \in \V_{L,z}^p$ for $z \in \VV_{L}^+$ and $ v_{L,z} :=  v^p_{L,z} \in \V_{L,z}^p$ for $z \in \VV_{L} \setminus \VV_{L}^+$, we obtain the decomposition~\eqref{equation:multilevel_local_decomposition}. It remains to show that this decomposition is stable~\eqref{equation:multilevel_local_stability}. First, we have for the coarsest level that
		\begin{align*}
			\Vert \nabla v_0 \Vert^{2} \eqreff{equation:norm_estimate_special}\le (d+1) \sum_{z \in \VV_0} \Vert \nabla  v^1_{0,z}  \Vert^{2}.
		\end{align*}
		For the finest level, it holds that
		\begin{align*}
			\sum_{z \in \VV_L} \| \nabla v_{L,z} \|^2 &\le \sum_{z \in \VV_{L} \backslash \VV_{L}^+} \| \nabla v^p_{L,z}  \|^2 + 2 \, \sum_{z \in \VV_L} \bigl(\| \nabla  v^1_{L,z}  \|^2 +  \| \nabla v^p_{L,z}  \|^2 \bigr) 
			\\
			&\le (d+1) \sum_{z \in \VV_L^+} \| \nabla  v^1_{L,z}  \|^2 + (d+1)  \sum_{z \in \VV_L} \| \nabla v^p_{L,z}  \|^2.
		\end{align*}
		A combination of the two estimates shows that
		\begin{align*}
			\| \nabla &v_0 \|^2 +  \sum_{\ell=1}^{L-1} \sum_{z \in \VV_{\ell}^{+}} \| \nabla v_{\ell,z} \|^2
			+ \sum_{z \in \VV_L} \| \nabla v_{L,z}\|^2 \\
			&\le (d+1) \Big(\sum_{z \in \VV_0} \Vert \nabla  v^1_{0,z}  \Vert^{2} + \sum_{\ell=1}^{L-1} \! \sum_{z \in \VV_{\ell}^{+}} \! \| \nabla v^1_{\ell,z} \|^2 + \sum_{z \in \VV_L^+} \| \nabla  v^1_{L,z}  \|^2 + \sum_{z \in \VV_L} \| \nabla v^p_{L,z}  \|^2\Big) \\
			&\le (d+1) \sum_{\ell=0}^{L} \sum_{z \in \VV_{\ell}^{+}} \| \nabla v^1_{\ell,z} \|^2
			+ (d+1)\sum_{z \in \VV_{L}} \| \nabla v^p_{L,z} \|^2\\
			&\eqreff*{equation:multilevel_local_lowest_order_stability}\le  \cs^{2} (d+1) \| \nabla v^1_L\|^2
			+ (d+1)\sum_{z \in \VV_{L}} \| \nabla v^p_{L,z} \|^2 \\
			&\eqreff*{equation:one_level_p_stability}\le \max\{1, \cs^{2}\} \cps^{2} (d+1)
			\| \nabla v_{L} \|^2.
		\end{align*}
		Hence, the decomposition~\eqref{equation:multilevel_local_decomposition} is stable with $(\csd')^2 \coloneqq \max\{1, \cs^{2}\} \cps^{2} (d+1)$ with respect to the $H^1(\Omega)$-seminorm.
		Taking into account the variations of the diffusion coefficient $\K$, we obtain~\eqref{equation:multilevel_local_stability}
		with the stability constant $\csd \coloneqq \csd' \Lambda_{\max} / \Lambda_{\min}$.
	\end{proof}

	\subsection{Strengthened Cauchy--Schwarz inequality on NVB-generated meshes} \label{subsection:scs}
	The following results are proved in the spirit of \cite{Hipt_Wu_Zheng_cvg_adpt_MG_12, CNX_12}. Note that the setting of this work is similar to \cite{Hipt_Wu_Zheng_cvg_adpt_MG_12}, and unlike \cite{CNX_12}, the underlying adaptive meshes of the space hierarchy are not restricted to one bisection per level.
	
	For analysis purposes, we introduce a sequence of uniformly refined triangulations indicated by $\{\widehat{\TT}_j\}_{j=0}^M$ such that 
		\(\widehat{\TT}_{j+1} \coloneqq \texttt{refine}(\widehat{\TT}_j, \widehat{\TT}_{j})\) and \(\widehat{\TT}_0 = \TT_{0}\), where \(\mathtt{refine}\) enforces one bisection per element. According to \cite{stevenson2008}, admissibility of \(\TT_{0}\) ensures that indeed each element \(T \in \widehat{\TT}_j\) is bisected only once into two children \(T^{\prime}, T^{\prime \prime} \in \widehat{\TT}_{j+1}\).
	In the following, we will indicate the equivalent notation to Section~\ref{section:Setting} on uniform triangulations
	$\widehat{\TT}_{j}$ with a hat, e.g., $\widehat{\V}_{j}^1$ is the equivalent of $\V_\ell^1$ on the uniformly refined mesh $\widehat{\TT}_{j}$.
	The connection of the uniformly refined meshes and their adaptively generated counterpart requires further notation. For a given level $0 \le \ell \le L$ and a given node $z\in \VV_\ell$, we define the generation  $g_{\ell, z}$ of the patch by the maximum number of times an element of the patch has been bisected
	\begin{align} \label{equation:generation}
	g_{\ell,z} := \max_{T \in \TT_{\ell, z}}  \log_2(|T_0|/|T|) \in \N_0,
	\end{align}
	where $T_0 \in \TT_0$ denotes the unique ancestor element of $T \in \TT_\ell$. Define the maximal generation $M = \max_{z\in \VV_L} g_{L,z}$.
	
	First, we present the following result for uniformly refined meshes and then exploit this for our setting of adaptively refined meshes.

	\begin{lemma}[Strengthened Cauchy--Schwarz on nested uniform meshes]\label{lem:str_CS_unif}
	Let $0\le i \le j \le M$, and $\widehat u_i \in \widehat \V_i^1$ as well as $\widehat v_j \in \widehat \V_j^1$.
	Then, it holds that
	\begin{align}\label{equation:str_CS_unif}
		\edual{\widehat u_i}{\widehat v_j} \le \cscsunif\, \delta^{j-i} \widehat h_j^{-1}
		\big\|  \nabla \widehat u_i  \big\| \big\| \widehat v_j  \big\|,
	\end{align}
	where $\delta=2^{-1/2}$ and $\cscsunif>0$ depends only on the domain $\Omega$, the initial triangulation $\TT_0$, $\Lambda_{\max}$, \linebreak[4] \(\max_{T  \in \widehat{\TT}_{M}}\Vert \div(\K) \Vert_{L^{\infty}(T)}\), and $\gamma$-shape regularity from \eqref{equation:shape_regularity}.
	\end{lemma}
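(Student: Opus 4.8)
The plan is to exploit the fact that each $\widehat v_j \in \widehat\V_j^1$ lives on a \emph{uniform} mesh, so that a single element $T \in \widehat\TT_i$ is split into exactly $2^{d(j-i)}$ congruent children on level $j$, and to reduce everything to a local, element-wise estimate on the coarse mesh $\widehat\TT_i$. First I would write $\edual{\widehat u_i}{\widehat v_j} = \sum_{T \in \widehat\TT_i} \dual{\K \nabla \widehat u_i}{\nabla \widehat v_j}_T$ and bound the diffusion coefficient by $\Lambda_{\max}$, reducing to estimating $\sum_T \|\nabla \widehat u_i\|_T \|\nabla \widehat v_j\|_T$. Since $\widehat u_i$ is affine on each $T \in \widehat\TT_i$, its gradient is a constant vector $\boldsymbol{g}_T$ on $T$, so $\dual{\nabla \widehat u_i}{\nabla \widehat v_j}_T = \boldsymbol{g}_T \cdot \int_T \nabla \widehat v_j \,\dx = \boldsymbol{g}_T \cdot \int_{\partial T} \widehat v_j \, \boldsymbol{n}\,\mathrm{d}s$, reducing the volume term to a boundary term.

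The key quantitative step is then the local \emph{strengthened} Cauchy--Schwarz estimate on one coarse element: for $T \in \widehat\TT_i$ and $\widehat v_j \in \widehat\V_j^1$ restricted to $T$, one has
\begin{align*}
	\Bigl| \int_{\partial T} \widehat v_j \, \boldsymbol{n}\,\mathrm{d}s \Bigr|
	\lesssim \widehat h_i^{(d-1)/2} \, \|\widehat v_j\|_{L^2(\partial T)}
	\lesssim \widehat h_i^{(d-1)/2} \, \widehat h_j^{-1/2} \, \|\widehat v_j\|_{L^2(T)},
\end{align*}
where the first inequality is Cauchy--Schwarz on $\partial T$ together with $|\partial T| \simeq \widehat h_i^{d-1}$, and the second is a (scaled, discrete) trace inequality on the uniformly refined subtriangulation of $T$ with mesh size $\widehat h_j$; here $\gamma$-shape regularity of the uniform refinements (from \eqref{equation:shape_regularity} via \cite[Theorem~2.1]{stevenson2008}) enters. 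Combining with $|\boldsymbol{g}_T| = |T|^{-1/2}\|\nabla \widehat u_i\|_T \simeq \widehat h_i^{-d/2}\|\nabla \widehat u_i\|_T$ gives the per-element bound
\begin{align*}
	\bigl| \dual{\nabla \widehat u_i}{\nabla \widehat v_j}_T \bigr|
	\lesssim \widehat h_i^{-1/2} \, \widehat h_j^{-1/2} \, \|\nabla \widehat u_i\|_T \, \|\widehat v_j\|_T.
\end{align*}
Summing over $T \in \widehat\TT_i$ with the discrete Cauchy--Schwarz inequality, using $\sum_T \|\nabla \widehat u_i\|_T^2 = \|\nabla \widehat u_i\|^2$ and $\sum_T \|\widehat v_j\|_T^2 = \|\widehat v_j\|^2$, and finally inserting $\widehat h_i^{-1/2} = (\widehat h_i/\widehat h_j)^{-1/2}\,\widehat h_j^{-1/2} = 2^{-(j-i)/2}\,\widehat h_j^{-1}\cdot \widehat h_j^{1/2}$ — more precisely $\widehat h_i^{-1/2}\widehat h_j^{-1/2} = 2^{-(j-i)/2}\,\widehat h_j^{-1} = \delta^{j-i}\widehat h_j^{-1}$ since $\widehat h_i = 2^{j-i}\widehat h_j$ — yields exactly \eqref{equation:str_CS_unif} with $\delta = 2^{-1/2}$ and a constant $\cscsunif$ depending only on $\TT_0$, $\Omega$, $\Lambda_{\max}/\Lambda_{\min}$, and $\gamma$.

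The main obstacle I anticipate is the discrete trace inequality $\|\widehat v_j\|_{L^2(\partial T)} \lesssim \widehat h_j^{-1/2}\|\widehat v_j\|_{L^2(T)}$ for a piecewise-affine function on the \emph{fine} uniform submesh of $T$: this is not the standard $H^1$-trace inequality but rather a discrete one requiring the correct $\widehat h_j$-scaling, proved element-by-element on the fine boundary layer of $T$ via a reference-element trace estimate and a scaling argument, and it is here that the uniformity of the refinement (equal fine mesh size on all of $T$) is genuinely used — for a general adaptively refined $T$ this step would fail, which is precisely why Lemma~\ref{lem:str_CS_unif} is stated only for nested uniform meshes and the adaptive case is handled separately via the generations $g_{\ell,z}$. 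A secondary technical point is keeping track of the boundary contributions on $\partial T \cap \partial\Omega$, where one uses $\widehat v_j \in H^1_0(\Omega)$ so that $\widehat v_j$ vanishes there and those faces drop out.
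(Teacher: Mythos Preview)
Your core argument --- integration by parts on each coarse element $T \in \widehat\TT_i$ using that $\nabla\widehat u_i|_T$ is constant, followed by a discrete trace inequality with the \emph{fine} scaling $\widehat h_j^{-1/2}$ for $\widehat v_j$, and the identity $\widehat h_i^{-1/2}\widehat h_j^{-1/2} = \delta^{j-i}\widehat h_j^{-1}$ --- is exactly the route the paper takes, and your identification of the trace scaling is in fact cleaner than the paper's presentation.

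There is, however, a genuine gap in how you handle the variable coefficient $\K$. You write that you ``bound the diffusion coefficient by $\Lambda_{\max}$, reducing to estimating $\sum_T \|\nabla\widehat u_i\|_T\|\nabla\widehat v_j\|_T$'', and then in the next sentence work with $\dual{\nabla\widehat u_i}{\nabla\widehat v_j}_T = \boldsymbol g_T\cdot\int_{\partial T}\widehat v_j\,\boldsymbol n$. These two steps are incompatible: once you apply Cauchy--Schwarz to get norms, you have already destroyed the inner-product structure needed for integration by parts; and you cannot instead bound $\dual{\K\nabla\widehat u_i}{\nabla\widehat v_j}_T$ by $\Lambda_{\max}\,\dual{\nabla\widehat u_i}{\nabla\widehat v_j}_T$, since the latter may be negative. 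Your identity $\boldsymbol g_T\cdot\int_{\partial T}\widehat v_j\,\boldsymbol n$ is only valid for $\K=\boldsymbol I$ (or $\K$ constant on $T$), whereas the paper assumes merely $\K|_T\in W^{1,\infty}(T)$ on coarse elements.

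The fix, which the paper carries out, is to integrate by parts \emph{with} $\K$: since $\widehat u_i$ is affine on $T$, one has $\mathrm{div}(\K\nabla\widehat u_i) = (\mathrm{div}\,\K)\cdot\nabla\widehat u_i$ on $T$, so
\[
\int_T \K\nabla\widehat u_i\cdot\nabla\widehat v_j
= -\int_T (\mathrm{div}\,\K)\cdot\nabla\widehat u_i\,\widehat v_j
+ \int_{\partial T}(\K\nabla\widehat u_i)\cdot\boldsymbol n\,\widehat v_j.
\]
The boundary term is treated exactly as you describe. The extra volume term is bounded by $\|\mathrm{div}\,\K\|_{L^\infty(T)}\|\nabla\widehat u_i\|_T\|\widehat v_j\|_T$, which is harmless since $\widehat h_i^{-1}\gtrsim 1$ lets you absorb it into the same $\widehat h_i^{-1}\|\nabla\widehat u_i\|_T\|\widehat v_j\|_T$ bound as the boundary contribution. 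This is also where the dependence on $\K\in W^{1,\infty}$ (and not just $L^\infty$) enters, which your version does not account for.
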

	
	\begin{proof}
	We begin by splitting the domain $\Omega$ into elementwise components, applying integration by parts, and using the Cauchy--Schwarz inequality. Note that the restriction of $\widehat u_i$ to any element $T \in \widehat \TT_i$ is an affine function, and hence the second derivatives vanish. Thus, it holds with the outer normal $\boldsymbol{n}$  to $\partial T$ that
	\begin{align*}
		\edual{\widehat u_i}{\widehat v_j}
		&= \sum_{T \in \widehat \TT_i} \int_T  \K \nabla \widehat u_i \cdot \nabla \widehat v_j \, \dx \\
		&=  \sum_{T \in \widehat \TT_i} \Big( - \int_T {\rm div} (\K \nabla \widehat u_i) \widehat v_j \, \dx +  \int_{\partial T}  \K \nabla \widehat u_i \cdot \boldsymbol{n} \, \widehat v_j \, \dx \Big)\\
		&\le  \sum_{T \in \widehat \TT_i} \Big(  \big\| ({\rm div} \K)  \cdot \nabla \widehat u_i  \big\|_{L^2(T)}  \big\| \widehat v_j  \big\|_{L^2(T)}  +  \big\|  \K \nabla \widehat u_i  \big\|_{L^2(\partial T)}   \big\| \widehat v_j  \big\|_{L^2(\partial T)}  \Big).
	\end{align*}
	Due to $\K \in W^{1, \infty}(T)$, the fact that $\widehat{u}_i, \widehat{v}_j$ are piecewise affine, a discrete trace inequality, and $\widehat h_i^{-1}  \gtrsim 1$, we get
	\begin{align*}
		\edual{\widehat u_i}{\widehat v_j} &\lesssim \sum_{T \in \widehat \TT_i} \Big(  \big\|\nabla \widehat u_i  \big\|_{L^2(T)}  \big\| \widehat v_j  \big\|_{L^2(T)}  +  \big\|  \nabla \widehat u_i  \big\|_{L^2(\partial T)}   \big\| \widehat v_j  \big\|_{L^2(\partial T)}  \Big)\\
		&\lesssim  \sum_{T \in \widehat \TT_i} \Big(  \big\|\nabla \widehat u_i  \big\|_{L^2(T)}  \big\| \widehat v_j  \big\|_{L^2(T)}  + \bigl(\widehat  h_i^{-1/2} \big\|  \nabla \widehat u_i  \big\|_{L^2(T)}\bigr) \, \bigl(\widehat h_i^{-1/2} \big\| \widehat v_j  \big\|_{L^2(T)}\bigr)  \Big)\\
		&=  \sum_{T \in \widehat \TT_i} \bigl(1+h_i^{-1}\bigr) \, \big\|\nabla \widehat u_i  \big\|_{L^2(T)}  \big\| \widehat v_j  \big\|_{L^2(T)}
		\\
		&\lesssim  \sum_{T \in \widehat \TT_i} \widehat h_i^{-1} \big\|  \nabla \widehat u_i  \big\|_{L^2(T)} \big\| \widehat v_j  \big\|_{L^2(T)}.
	\end{align*}
	Moreover, note that due to uniform refinement, we have equivalence $\delta^{j-i} = (2^{-1/2})^{j-i} \simeq \big(\widehat h_j/\widehat h_i \big)^{1/2}$ and $\widehat{h}_j \le \widehat{h}_i$. Using the last equation multiplied by $1 = \widehat{h}_j^{1/2} \, \widehat{h}_j^{-1/2}$, we derive that
	\begin{align*}
		\edual{\widehat u_i}{\widehat v_j}
		&\lesssim \sum_{T \in \widehat \TT_i} \Big(\cfrac{\widehat h_j}{\widehat h_i}\Big)^{1/2} \, \widehat{h}_i^{-1/2} \, \widehat{h}_j^{-1/2} \big\|  \nabla \widehat u_i  \big\|_{L^2(T)} \big\| \widehat v_j  \big\|_{L^2(T)} \\
		&\lesssim \sum_{T \in \widehat \TT_i} \delta^{j-i} \, \widehat h_j^{-1} \, \big\|  \nabla \widehat u_i  \big\|_{L^2(T)} \big\| \widehat v_j  \big\|_{L^2(T)} \le  \widehat h_j^{-1} \delta^{j-i} \big\|  \nabla \widehat u_i  \big\|_{L^2(\Omega)} \big\| \widehat v_j  \big\|_{L^2(\Omega)}.
	\end{align*}
	This concludes the proof.
	\end{proof}

	The last result enables us to tackle the setting of adaptively-refined meshes.

	\begin{proposition}[Strengthened Cauchy--Schwarz on nested adaptive meshes]\label{lem:str_CS_adapt}
	Consider levelwise functions $ v_\ell = \sum_{z \in \VV_\ell^+} v_{\ell, z}^1  \in \V_{\ell}^1$ with $v_{\ell, z}^1 \in \V_{\ell, z}^1$ for all $1 \le \ell \le L-1$.
	Then, it holds that
	\begin{align}\label{equation:strengthened_CS_adapt}
		\sum_{\ell=1}^{L-1} \sum_{k=1}^{\ell-1}	\edual{v_k}{ v_\ell} \le \cscs
		\Big(	\sum_{k=1}^{L-2} \sum_{w \in \VV_k^+}	\enorm{ v_{k,w}^1 }^2 \Big)^{1/2}  \Big( \sum_{\ell=1}^{L-1}  \sum_{z \in \VV_\ell^+} \enorm{  v_{\ell,z}^1 }^2 \Big)^{1/2},
	\end{align}
	where $\cscs>0$ depends only on $\Omega$, the initial triangulation $\TT_0$, $\Lambda_{\max} / \Lambda_{\min}$, \linebreak[4] \(\max_{T  \in \TT_{L}}\Vert \div(\K) \Vert_{L^{\infty}(T)} / \Lambda_{\min}\), and $\gamma$-shape regularity \eqref{equation:shape_regularity}.	\end{proposition}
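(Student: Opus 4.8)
The plan is to follow the template of~\cite{CNX_12,Hipt_Wu_Zheng_cvg_adpt_MG_12}: transfer the levelwise patch contributions to the uniformly refined meshes $\widehat\TT_m$ so that Lemma~\ref{lem:str_CS_unif} becomes applicable, and then sum the resulting geometric series by means of a finite-overlap argument tailored to bisection hierarchies. First I would localize: since $v_\ell=\sum_{z\in\VV_\ell^+}v_{\ell,z}^1$ with $\operatorname{supp}v_{\ell,z}^1\subseteq\overline{\omega_{\ell,z}}$ (and similarly for $v_k$), bilinearity of $\edual{\cdot}{\cdot}$ turns the left-hand side into
\[
\sum_{\ell=1}^{L-1}\sum_{k=1}^{\ell-1}\edual{v_k}{v_\ell}
=\sum_{\ell=1}^{L-1}\sum_{k=1}^{\ell-1}\ \sum_{w\in\VV_k^+}\ \sum_{z\in\VV_\ell^+}\edual{v_{k,w}^1}{v_{\ell,z}^1},
\]
and only the terms with $\overline{\omega_{k,w}}\cap\overline{\omega_{\ell,z}}\ne\emptyset$ contribute.

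Next I would assign to each vertex $z\in\VV_\ell$ the uniform-mesh index $n(\ell,z)$, defined as the smallest $m$ with $\widehat h_m\le h_{\ell,z}$; by $\gamma$-shape regularity $\widehat h_{n(\ell,z)}\simeq h_{\ell,z}$, and $n(\ell,z)$ stays comparable, up to a $\gamma$-dependent factor, to the patch generation $g_{\ell,z}$, so that it ranges over a finite set. The compatibility of newest-vertex bisection with uniform refinement gives $\V_{\ell,z}^1\subseteq\widehat\V_{n(\ell,z)}^1$ (after extension by zero), hence $v_{\ell,z}^1$ may be regarded as an element of $\widehat\V_{n(\ell,z)}^1$ and likewise $v_{k,w}^1\in\widehat\V_{n(k,w)}^1$. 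Moreover, for $k<\ell$ the mesh $\TT_\ell$ refines $\TT_k$, so whenever the two patches overlap shape regularity yields $h_{\ell,z}\lesssim h_{k,w}$ and thus $n(\ell,z)\ge n(k,w)-C_0$ with $C_0=C_0(\gamma)$. For a surviving term with $n(k,w)\le n(\ell,z)$ I would invoke Lemma~\ref{lem:str_CS_unif} with $i=n(k,w)$ and $j=n(\ell,z)$, absorb the arising factor $\widehat h_{n(\ell,z)}^{-1}$ by the inverse estimate $\|v_{\ell,z}^1\|_{\omega_{\ell,z}}\lesssim h_{\ell,z}\,\|\nabla v_{\ell,z}^1\|_{\omega_{\ell,z}}\simeq\widehat h_{n(\ell,z)}\,\|\nabla v_{\ell,z}^1\|_{\omega_{\ell,z}}$, and pass from $\|\nabla\cdot\|$ to $\enorm{\cdot}$ at the price of a factor $\Lambda_{\max}/\Lambda_{\min}$; for the at most $C_0$ exceptional generations with $n(k,w)-C_0\le n(\ell,z)<n(k,w)$ I would apply Lemma~\ref{lem:str_CS_unif} with the two functions swapped and use $\widehat h_{n(k,w)}\simeq\widehat h_{n(\ell,z)}$. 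In every case this gives
\[
\edual{v_{k,w}^1}{v_{\ell,z}^1}\ \lesssim\ \delta^{\,|n(\ell,z)-n(k,w)|}\,\enorm{v_{k,w}^1}\,\enorm{v_{\ell,z}^1}.
\]

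Finally I would sum. Grouping the admissible $(k,w)$ by the value $m\coloneqq n(k,w)$ and the $(\ell,z)$ by $m'\coloneqq n(\ell,z)$, the decisive ingredient is a combinatorial property of newest-vertex-bisection patch hierarchies (in the spirit of~\cite{Hipt_Wu_Zheng_cvg_adpt_MG_12}): for fixed $(\ell,z)$ and fixed $m$, the number of pairs $(k,w)$ with $k<\ell$, $w\in\VV_k^+$, $\overline{\omega_{k,w}}\cap\overline{\omega_{\ell,z}}\ne\emptyset$, and $n(k,w)=m$ is bounded by a constant $C_{\mathrm{ovl}}=C_{\mathrm{ovl}}(\gamma)$, and symmetrically in the roles of the two index pairs. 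Splitting $\delta^{|m'-m|}=\delta^{|m'-m|/2}\delta^{|m'-m|/2}$, applying the discrete Cauchy--Schwarz inequality in the index pairs, and summing the geometric series $\sum_{n\ge0}\delta^n=(1-\delta)^{-1}$ then produces
\[
\sum_{\ell=1}^{L-1}\sum_{k=1}^{\ell-1}\edual{v_k}{v_\ell}
\le\frac{C\,C_{\mathrm{ovl}}}{1-\delta}\Big(\sum_{k=1}^{L-2}\sum_{w\in\VV_k^+}\enorm{v_{k,w}^1}^2\Big)^{1/2}\Big(\sum_{\ell=1}^{L-1}\sum_{z\in\VV_\ell^+}\enorm{v_{\ell,z}^1}^2\Big)^{1/2},
\]
where enlarging the summation ranges on the right only adds nonnegative terms; collecting constants yields the claim with $\cscs$ depending only on $\Omega$, $\TT_0$, $\Lambda_{\max}/\Lambda_{\min}$, and $\gamma$.

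The step I expect to be the main obstacle is the last one — establishing the finite-overlap-with-fixed-generation property for the hierarchy $\{\omega_{\ell,z}:0\le\ell\le L,\ z\in\VV_\ell^+\}$ and organizing the doubly-indexed geometric sum cleanly. This is precisely where the generality of our setting (arbitrarily many bisections per level, in contrast to~\cite{CNX_12}) forces the more delicate combinatorics of~\cite{Hipt_Wu_Zheng_cvg_adpt_MG_12} rather than one-bisection-per-level bookkeeping; a secondary nuisance is the careful matching of patch generations $g_{\ell,z}$ with the uniform-mesh indices $n(\ell,z)$ so that the exponent in Lemma~\ref{lem:str_CS_unif} always carries the correct sign, the finitely many near-diagonal exceptions being absorbed into the constant.
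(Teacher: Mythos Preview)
Your overall strategy---reduce to Lemma~\ref{lem:str_CS_unif} via a generation index, exploit the finite-level combinatorics of newest-vertex bisection, and close with a geometric series---matches the paper's. The gap is in the summation step.

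The ``symmetric'' finite-overlap claim is false. The forward direction is fine: for fixed $(\ell,z)$ with $n(\ell,z)=m'$ and fixed $m$, the pairs $(k,w)$ with $k<\ell$, $w\in\VV_k^+$, $n(k,w)=m$, and $\omega_{k,w}\cap\omega_{\ell,z}\neq\emptyset$ are indeed uniformly bounded (each such $w$ is a vertex of $\widehat\TT_{m+C}$ near $z$, shape regularity bounds their number, and the paper's $C_{\mathrm{lev}}$ bound~\eqref{bound_overlap_gens} controls the multiplicity in $k$). But the reverse direction fails: fix a coarse pair $(k,w)$ with $n(k,w)=m$ and a fine generation $m'\gg m$. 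The number of pairs $(\ell,z)$ with $\ell>k$, $z\in\VV_\ell^+$, $n(\ell,z)=m'$, and $\omega_{\ell,z}\subset\omega_{k,w}$ scales like $(\widehat h_m/\widehat h_{m'})^d\simeq 2^{d(m'-m)}$ (already for uniform refinement, which is an admissible hierarchy). Since $\delta^{m'-m}=2^{-(m'-m)/2}$, the kernel sum $\sum_{m'}\delta^{m'-m}\cdot 2^{d(m'-m)}$ diverges for every $d\ge 1$, so neither the Schur test nor your split $\delta^{|m'-m|}=\delta^{|m'-m|/2}\delta^{|m'-m|/2}$ followed by Cauchy--Schwarz can close the estimate from the pairwise bound alone.

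What the paper does differently---and what is essential---is to aggregate \emph{before} invoking Lemma~\ref{lem:str_CS_unif}: for each generation $j$ it collects all $v_{\ell,z}^1$ with $g_{\ell,z}=j$ into a single function of $\widehat\V_j^1$ (and analogously on the $i$-side), applies Lemma~\ref{lem:str_CS_unif} to the aggregates, and only afterwards splits back into patch contributions using the $(d{+}1)$-overlap~\eqref{equation:patch_overlap} and the finite-level bound~\eqref{bound_overlap_gens}. The point is that the $L^2$-norm of the aggregate automatically accounts for the essential disjointness of the many small patches, so the factor $\widehat h_j^{-1}\|\widehat v_j\|$ converts via Poincar\'e to $\big(\sum_{\ell,z}\|\nabla v_{\ell,z}^1\|^2\big)^{1/2}$ without picking up the cardinality of the fine patches. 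Applying the lemma pairwise discards precisely this orthogonality. Two minor remarks: the inequality $\|v_{\ell,z}^1\|\lesssim h_{\ell,z}\|\nabla v_{\ell,z}^1\|$ is Friedrichs/Poincar\'e, not an inverse estimate; and to guarantee the inclusion $\V_{\ell,z}^1\subset\widehat\V_m^1$ it is cleaner to use the generation $g_{\ell,z}$ of~\eqref{equation:generation} rather than a mesh-size threshold, since the former is exactly what makes each element of $\TT_{\ell,z}$ an element of the corresponding uniform mesh.
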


	\begin{proof}
	Let $M \in \mathbb{N}$. The proof consists of five steps.
	
	\medskip
	\noindent{\bf Step~1.}\quad
	First note that, for any $0 < \delta < 1$ and $x_i, y_i > 0$ with $0\le i \le M$, there holds
	\begin{align}\label{geo_series}
		\sum_{i=0}^M \sum_{j=i}^M \delta^{j-i} x_i y_j \le \frac{1}{1- \delta} \Big( \sum_{i=0}^M x_i^2 \Big)^{1/2} \Big( \sum_{j=0}^M y_j^2 \Big)^{1/2}.
	\end{align}
	To see this, we change the summation order accordingly and use the Cauchy--Schwarz inequality to obtain
	\begin{align*}
		\sum_{i=0}^M \sum_{j=i}^M \, &\delta^{j-i} x_i y_j
		= \! \sum_{i=0}^M \sum_{m=0}^{M-i}  \delta^{m} x_i y_{m+i}
		= \sum \limits_{m=0}^M \sum \limits_{i=0}^{M-m} \delta^m x_i y_{m+i} \\
		&\le \sum_{m=0}^{M} \delta^{m}\, \Big[ \big( \sum_{i=0}^{M-m} x_i^2 \big)^{1/2} 
		\big( \sum_{i=0}^{M-m} y_{m+i}^2 \big)^{1/2} \Big]
		\le  \Big(\sum_{m=0}^{M} \delta^{m} \Big) \Big( \sum_{i=0}^{M} x_i^2 \Big)^{1/2} \Big(\sum_{j=0}^{M} y_{j}^2  \Big)^{1/2}.
	\end{align*}
	The geometric series then proves the claim~\eqref{geo_series}.

	\medskip
	\noindent{\bf Step~2.}\quad
	Let $z \in \VV_L$ and $0 \le j \le M$ and recall the patch generation $g_{\ell,z} $ from~\eqref{equation:generation}. We introduce the set
	\begin{align}\label{overlap_gens}
		\mathscr{L}_{\underline\ell,\overline\ell}(z, j) := \{ \ell \in \{\underline\ell, \ldots, \overline\ell \}: z \in \VV_\ell^+ \text{ and } g_{\ell,z} = j \} \quad \text{with } 0 \le \underline{\ell} \le \overline{\ell} \le L.
	\end{align}
	This set allows to track how large the levelwise overlap of patches with the same generation is.
	Crucially, the cardinality of these sets is uniformly bounded by
	\begin{align}\label{bound_overlap_gens}
		\max_{\substack{z \in \VV_L \\ 0 \le j \le M}}\#(	\mathscr{L}_{0,L}(z, j)) \le C_{\mathrm{lev}} < \infty;
	\end{align}
	see, e.g., \cite[Lemma~3.1]{Wu_Chen_cvg_adpt_MG_06} in the two-dimensional setting with arguments that transfer to three dimensions. The constant $C_{\mathrm{lev}}$ solely depends on $\gamma$-shape regularity~\eqref{equation:shape_regularity}.
	
	\medskip
	\noindent{\bf Step~3.}\quad
	We introduce a way to reorder the patch contributions by generations~\eqref{equation:generation}. Note that, for any $0\le j \le M$, $1 \le \ell \le L-1$, and $z \in \VV_\ell^+ $ such that $g_{\ell,z} = j$, the patch contribution $ v_{\ell,z}^1 \in  \V_{\ell, z}^1$ also belongs to
	$\widehat{\V}_j^1$. Once the generation constraint is introduced, one can shift the perspective from summing over ``adaptive'' levels and associated vertices to summing over ``uniform'' vertices and \emph{only} the (finitely many, cf.~\eqref{bound_overlap_gens}) levels where each vertex satisfies the generation constraint, i.e., for $0\le \underline\ell \le \overline\ell \le L $ and $0 \le j \le M$, the two following sets coincide
	
	\begin{align}
		\begin{split}
			\set{(\ell, z) \in \N_0 \times \VV_L}{\ell \in &\{\underline\ell, \ldots, \overline\ell\},  z \in \VV_\ell^+ \text{ with } g_{\ell,z} = j}
			\\
			&= \set{(\ell, z)\in \N_0 \times \VV_L}{z \in \widehat{\VV}_j, \ell \in  \! {\mathscr{L}}_{\underline\ell, \overline\ell}(z, j)}.
		\end{split}
		\label{unif_adapt}
	\end{align}

	\medskip
	\noindent{\bf Step~4.} According to $\gamma$-shape regularity \eqref{equation:shape_regularity}, all elements in the patch have comparable size depending on \(C_{\mathrm{qu}}\) from \eqref{equation:coarse_quasiuniformity}. If $g_{\ell, z} = j$, (at least) one element $T^\star \in \TT_{\ell, z}$ satisfies $T^\star \in \widehat{\TT}_j$ and it follows that $ \widehat h_j \simeq \vert T^\star \vert^{1/d} \simeq \vert \omega_{\ell, z} \vert^{1/d} \simeq h_{\ell, z}.$
	In particular, there exists $C_{\rm eq}>0$ such that
	\begin{align}
		\widehat h_j^{-1} \le C_{\rm eq} h_{\ell, z}^{-1}.
		\label{equation:loc_size_equiv}
	\end{align}

	\medskip
	\noindent{\bf Step~5.}\quad
	We proceed to prove the main estimate~\eqref{equation:strengthened_CS_adapt}. The central feature of the following approach is to introduce \emph{additional} sums over the generations with generation constraints, i.e., there holds for every admissible $\ell, k$, that
	\begin{align*}
		\edual{v_k}{ v_\ell}
		&=\sum_{z \in \VV_\ell^+}  \sum_{w \in \VV_k^+} \edual{v_{k,w}^1}{v_{\ell,z}^1}
		= \sum_{j=0}^{M} \sum_{i=0}^{M}  \sum_{\substack{z \in \VV_\ell^+ \\ g_{\ell,z} = j}} \sum_{\substack{w \in \VV_k^+ \\ g_{k,w} = i}} \edual{v_{k,w}^1}{v_{\ell,z}^1}
		\\
		&=  \sum_{j=0}^{M} \sum_{i=0}^{j}   \sum_{\substack{z \in \VV_\ell^+ \\ g_{\ell,z} = j}} \sum_{\substack{w \in \VV_k^+ \\ g_{k,w} = i}} \edual{v_{k,w}^1}{v_{\ell,z}^1} + \sum_{j=0}^{M}  \sum_{i=j+1}^{M}  \sum_{\substack{z \in \VV_\ell^+ \\ g_{\ell,z} = j}} \sum_{\substack{w \in \VV_k^+ \\ g_{k,w} = i}} \edual{v_{k,w}^1}{v_{\ell,z}^1}.
	\end{align*}
	We abbreviate the terms as $S_1(\ell, k)$ and $S_2(\ell, k)$, respectively. A change of the summation of order $i$ and $j$ yields for $S_1(\ell, k)$ that
	\begin{align*}
		S_1(\ell, k) = \sum_{i=0}^{M} \sum_{j=i}^{M}  \sum_{\substack{z \in \VV_\ell^+ \\ g_{\ell,z} = j}} \sum_{\substack{w \in \VV_k^+ \\ g_{k,w} = i}} \edual{v_{k,w}^1}{v_{\ell,z}^1}.
	\end{align*}
	Summing $S_2(\ell, k)$ over all $\ell$ and $k$ and changing the order of summation, we obtain
	\begin{align*}
		\sum \limits_{\ell=1}^{L-1} \sum \limits_{k=1}^{\ell-1} S_2(\ell, k) = \sum_{j=0}^{M}  \sum_{i=j+1}^{M}  \sum_{k=1}^{L-2} \sum_{\ell=k+1}^{L-1}  \sum_{\substack{z \in \VV_\ell^+ \\ g_{\ell,z} = j}} \sum_{\substack{w \in \VV_k^+ \\ g_{k,w} = i}}
		\edual{v_{k,w}^1}{v_{\ell,z}^1}.
	\end{align*}
	Combining these two identities with \eqref{unif_adapt}, we see that
	\begin{align*}
		\sum \limits_{\ell=1}^{L-1} \sum \limits_{k=1}^{\ell-1} \Big(S_1(\ell, k) + S_2(\ell, k) \Big) &= \sum_{i=0}^{M} \sum_{j=i}^{M}  \sum_{\ell=1}^{L-1} \edual[\Big]{\sum_{\substack{w \in \widehat{\VV}_i}}  \sum_{k \in \mathscr{L}_{1, \ell-1}(w,i)} \!\!\! v_{k,w}^1}{\sum_{\substack{z \in \VV_\ell^+ \\ g_{\ell,z} = j}} \!\! v_{\ell,z}^1}
		\\
		&\quad + \sum_{j=0}^{M}  \sum_{i=j+1}^{M}  \sum_{k=1}^{L-2}
		\edual[\Big]{ \sum_{\substack{w \in \VV_k^+ \\ g_{k,w} = i}} \!\! v_{k,w}^1}{\sum_{\substack{z \in \widehat{\VV}_j}}  \sum_{\ell \in \mathscr{L}_{k+1,L-1}(z,j)} \!\!\! v_{\ell,z}^1}.
	\end{align*}
	We define the last two terms as $S_1$ and $S_2$, respectively. Since the second term $S_2$ is treated in the same way, we only present detailed estimations of the first term $S_1$. The strengthened Cauchy--Schwarz inequality~\eqref{equation:str_CS_unif} for functions defined on uniform meshes followed by the patch overlap~\eqref{equation:norm_estimate_general} leads to
	\begin{align*}
		S_1 \le \cscsunif
		\sum_{i=0}^{M} \sum_{j=i}^{M}  \delta^{j-i}  \sum_{\ell=1}^{L-1}
		\Big( (d+1)\sum_{w \in \widehat\VV_i}  \big\|  \! \! \! \sum_{k \in \mathscr{L}_{1, \ell-1}(w,i)} \! \! \! \nabla v_{k,w}^1 \big\|^2\Big)^{1/2}  \sum_{\substack{z \in \VV_\ell^+ \\ g_{\ell,z} = j}}   \widehat h_j^{-1} \big\| v_{\ell,z}^1 \big\|.
	\end{align*}
	The identity~\eqref{unif_adapt} and the finite levelwise overlap~\eqref{bound_overlap_gens} show
	\begin{align*}
		\sum_{w \in \widehat\VV_i} \big\| \! \! \! \sum_{k \in \mathscr{L}_{1, \ell-1}(w,i)} \! \! \!  \! \! \!  \! \!  \! \! \nabla v_{k,w}^1 \big\|^2
		\le
		\sum_{k=1}^{\ell -1}  \sum_{\substack{w \in \VV_k^+ \\ g_{k,w} = i}}  \! \! \!
		\#( \mathscr{L}_{1, \ell-1}(w,i)) \big\|  \nabla  v_{k,w}^1 \big\|^2  \!
		\le  C_{\mathrm{lev}} \! \!\sum_{k=1}^{L-2}  \sum_{\substack{w \in \VV_k^+ \\ g_{k,w} = i}}  \! \! \big\|  \nabla  v_{k,w}^1 \big\|^2\! \! .
	\end{align*}
	The equivalence of mesh sizes from~\eqref{equation:loc_size_equiv} and a Poincaré-inequality prove
	\begin{align*}
		\sum_{\ell=1}^{L-1}	\sum_{\substack{z \in \VV_\ell^+ \\ g_{\ell,z} = j}}   \widehat h_j^{-1} \big\| v_{\ell,z}^1 \big\| \le C_{\rm eq} C_{\rm P} \sum_{\ell=1}^{L-1}  \!\! \sum_{\substack{z \in \VV_\ell^+ \\ g_{\ell,z} = j}}  \big\|  \nabla  v_{\ell,z}^1 \big\|.
	\end{align*}
	A combination of~\eqref{unif_adapt} with~\eqref{equation:norm_estimate_special} and~\eqref{bound_overlap_gens}, followed again by~\eqref{unif_adapt}, yields
	\begin{align*}
			\Bigl(\sum_{\ell=1}^{L-1}  \!\! \sum_{\substack{z \in \VV_\ell^+ \\ g_{\ell,z} = j}} \!\!  \big\|  \nabla  v_{\ell,z}^1 \big\|\Bigr)^2
			= \Big(\sum_{\substack{z \in \widehat\VV_j}} \! \sum_{\ell \in  \mathscr{L}_{1, L-1}(\revision{z, j}) }  \!\! \!\! \!\!  \!\!  \big\|  \nabla  v_{\ell,z}^1 \big\| \Big)^{2}
			\le (d+1) \, C_{\mathrm{lev}} \sum_{\ell=1}^{L-1}  \sum_{\substack{z \in \VV_\ell^+ \\ g_{\ell,z} = j}} \!\!    \big\|  \nabla v_{\ell,z}^1 \big\|^2.
	\end{align*}
	Thus, we obtain the bound
	\begin{equation*}
		\sum_{\ell=1}^{L-1}  \!\! \sum_{\substack{z \in \VV_\ell^+ \\ g_{\ell,z} = j}} \!\!  \big\|  \nabla  v_{\ell,z}^1 \big\| \le (d+1)^{1/2} C_{\mathrm{lev}}^{1/2} \, \Bigl(\sum_{\ell=1}^{L-1}  \sum_{\substack{z \in \VV_\ell^+ \\ g_{\ell,z} = j}} \!\!    \big\|  \nabla v_{\ell,z}^1 \big\|^2\Bigr)^{1/2}
	\end{equation*}
	Combining all estimates, together with the geometric series bound~\eqref{geo_series}, confirms
	\begin{align*}
		S_1 \le \cscsunif (d+1)
		C_{\mathrm{lev}} C_{\rm eq}  C_{\rm P} \frac{1}{1-\delta}
		\Big( \sum_{k=1}^{L-2}  \sum_{w \in \VV_k^+} \big\|  \nabla  v_{k,w}^1 \big\|^2\Big)^{1/2}
		\Big(  \sum_{\ell=1}^{L-1}   \sum_{z \in \VV_\ell^+}   \big\|  \nabla v_{\ell,z}^1 \big\|^2\Big)^{1/2}.
	\end{align*}
	Finally, the result~\eqref{equation:strengthened_CS_adapt} is obtained after summing together with the analogous estimations coming from the remaining term $S_2$ and taking into consideration the variations of the diffusion coefficient $\K$ so that the result holds for the energy norm. This concludes the proof.
	\end{proof}
	
	\subsection{Proof of the main results}\label{subsection:proof_main_result}
	For the sake of a concise presentation, we only consider the case $p> 1$. The case $p=1$ is already covered in the literature \cite{CNX_12, Wu_Zheng_Multigrid} and follows from our proof with only minor modifications.
	\begin{proof}[Proof of Theorem~\ref{theorem:solver}, connection of solver and estimator~\eqref{equation:pythagoras}]
	The proof consists of two steps.
	\medskip

	\noindent{\bf Step~1.} We show that there holds the identity
	\begin{align} \label{equation:int_res_LB_theorem}
		\begin{split}
			\enorm[\Big]{\sum_{\ell = 0}^{L-1} \lambda_\ell \rho_\ell}^2 - 2 \edual[\Big]{u_L^\star - v_L}{&\sum_{ \ell = 0}^{L-1} \lambda_\ell \rho_\ell} \\
			&= -\enorm{\rho_0}^2  + \sum_{\ell = 1}^{L-1} \enorm{\lambda_\ell \rho_\ell}^2
			- 2 \sum_{\ell = 1}^{L-1} \lambda_\ell \sum\limits_{z \in \VV_\ell^+} \enorm{\rho_{\ell,z}}^2.
		\end{split}
	\end{align}
	Indeed, note that $\sigma_\ell = \sum_{k=0}^\ell \lambda_k \rho_k$. By definition of the local lowest-order problems in \eqref{equation:alg_step1} and \eqref{equation:alg_step2} as well as the definition of $\rho_{\ell} = \sum_{z \in \VV_{\ell}^{+}} \rho_{\ell, z}$, we have
	\begin{align*}
		\edual[\Big]{u_L^\star - v_L}{\!\!\sum_{\ell = 0}^{L-1} \!\lambda_\ell \rho_\ell} ~&\eqreff*{equation:residual_functional}=~
		R_L(\rho_0)  + \!\!\sum_{\ell = 1}^{L-1} \lambda_\ell \!\! \sum\limits_{z \in \VV_\ell^+} \! \! R_L(\rho_{\ell,z}) ~\eqreff*{equation:alg_step1}=~  \enorm{\rho_0}^2  + \!\! \sum_{\ell = 1}^{L-1} \lambda_\ell \!\! \sum\limits_{z \in \VV_\ell^+} \! \! R_L(\rho_{\ell, z})
		\\
		&\eqreff*{equation:alg_step2}=~ \enorm{\rho_0}^2 + \sum_{\ell = 1}^{L-1} \lambda_\ell  \sum\limits_{z \in \VV_\ell^+} \Big(\enorm{\rho_{\ell,z}}^2 + \edual{\sigma_{\ell-1}}{\rho_{\ell,z}} \Big)
		\\
		&
		=\enorm{\rho_0}^2 + \sum_{\ell = 1}^{L-1} \lambda_\ell \sum\limits_{z \in \VV_\ell^+} \Big(\enorm{\rho_{\ell,z}}^2 + \sum_{k = 0}^{\ell-1} \edual{\lambda_k \rho_k}{\rho_{\ell,z}} \Big)
		\\
		&
		= \ \enorm{\rho_0}^2 + \sum_{\ell = 1}^{L-1} \lambda_\ell \sum\limits_{z \in \VV_\ell^+} \enorm{\rho_{\ell,z}}^2 + \sum_{\ell = 1}^{L-1} \sum_{k = 0}^{\ell-1} \edual{\lambda_k \rho_k}{\lambda_\ell \rho_\ell}.
	\end{align*}
	Thus, by expanding the square, we have
	\begin{align*}
		\enorm[\Big]{\sum_{\ell = 0}^{L-1} \lambda_\ell \rho_\ell}^2 - 2 \edual[\Big]{u_L^\star - &v_L}{\sum_{ \ell = 0}^{L-1} \lambda_\ell \rho_\ell}
		= \sum_{\ell = 0}^{L-1}  \enorm{\lambda_\ell \rho_{\ell}}^2 - 2 \enorm{\rho_0}^2 - 2 \sum_{\ell = 1}^{L-1} \lambda_\ell \sum\limits_{z \in \VV_\ell^+} \enorm{\rho_{\ell,z}}^2 \\
		&= -\enorm{\rho_0}^2  + \sum_{\ell = 1}^{L-1} \enorm{\lambda_\ell \rho_\ell}^2
		- 2 \sum_{\ell = 1}^{L-1} \lambda_\ell \sum\limits_{z \in \VV_\ell^+} \enorm{\rho_{\ell,z}}^2.
	\end{align*}
	This proves the identity \eqref{equation:int_res_LB_theorem}.
	\medskip
	
	\noindent{\bf Step~2.} Recall that $\Phi(v_L) = v_L + \sigma_L = v_L + \sigma_{L-1} + \lambda_L \rho_L$. By definition of $R_L$ in \eqref{equation:residual_functional} and the choice of $\lambda_L$ in Algorithm~\ref{algorithm:solver}, we have
	\begin{align*}
		\enorm{u_L^\star - \Phi(&v_L)}^2
		= \enorm{u_L^\star - (v_L + \sigma_{L-1})}^2 - 2 \, \lambda_L \, \edual{u_L^\star - (v_L + \sigma_{L-1})}{\rho_L} + \enorm{\lambda_L\rho_L}^2
		\\
		&= \enorm{u_L^\star - (v_L+ \sigma_{L-1})}^2 -2\lambda_L \Big(R_L(\rho_{L}) - \edual{\sigma_{{L}-1}}{\rho_{L}}\Big) + \lambda_L \sum\limits_{z \in \VV_L} \enorm{\rho_{L,z}}^2
		\\
		&\eqreff*{equation:alg_step2}
		=~ \enorm[\Big]{u_L^\star - \Big(v_L + \sum_{\ell = 0}^{L-1} \lambda_\ell \rho_\ell \Big)}^2 - \lambda_L \sum\limits_{z \in \VV_L} \enorm{\rho_{L,z}}^2.
	\end{align*}
	For the first term it holds that
	\begin{align*}
		\enorm[\Big]{u_L^\star - \Big(v_L + \sum_{\ell = 0}^{L-1} \lambda_\ell \rho_\ell &\Big)}^2
		= \enorm{u_L^\star - v_L}^2 + \enorm[\Big]{\sum_{\ell = 0}^{L-1} \lambda_\ell \rho_\ell}^2 - 2 \edual[\Big]{u_L^\star - v_L}{\sum_{\ell = 0}^{L-1} \lambda_\ell \rho_\ell}
		\\
		&\eqreff*{equation:int_res_LB_theorem}
		=~ \enorm{u_L^\star - v_L}^2 -\enorm{\rho_0}^2  + \sum_{\ell = 1}^{L-1} \enorm{\lambda_\ell \rho_\ell}^2
		- 2 \sum_{\ell = 1}^{L-1} \lambda_\ell \sum\limits_{z \in \VV_\ell^+} \enorm{\rho_{\ell,z}}^2\\
		&\eqreff*{equation:loc_global_stepsizes}
		\le \enorm{u_L^\star - v_L}^2 - \enorm{\rho_0}^2 -\sum_{\ell = 1}^{L-1} \lambda_\ell \sum\limits_{z \in \VV_\ell^+} \enorm{\rho_{\ell,z}}^2.
	\end{align*}
	Combining the last two estimates with the definition of $\etalg(v_L)$ in Algorithm~\ref{algorithm:solver}, we obtain
	\begin{align*}
		\enorm{u_L^\star - \Phi(v_L)}^2
		&\le \enorm{u_L^\exact - v_L}^2 - \enorm{\rho_0}^2 - \sum_{\ell = 1}^{L-1} \lambda_\ell \sum\limits_{z \in \VV_{\ell}^+} \enorm{\rho_{\ell,z}}^2 - \lambda_L \sum\limits_{z \in \VV_L} \enorm{\rho_{L,z}}^2\\
		&
		= \enorm{u_L^\star - v_L}^2 -  \etalg(v_L)^2.
	\end{align*}
	This concludes the proof of~\eqref{equation:pythagoras}.
	\end{proof}
	
	\begin{proof}[Proof of Theorem~\ref{theorem:solver}, lower bound in~\eqref{equation:equivalence}]
	The relation between the solver and the estimator given in~\eqref{equation:pythagoras} shows that $\etalg(v_L) \le \enorm{u_L^\star - v_L}$.
	\end{proof}
	
	\begin{proof}[Proof of Corollary~\ref{corollary:solver}, equivalence of~\eqref{equation:contraction} and~\eqref{equation:equivalence}]
	We prove that the solver contraction in~\eqref{equation:contraction} is equivalent to the upper bound of~\eqref{equation:equivalence}.
	
	First, suppose that~\eqref{equation:contraction} holds. Then, we proceed similarly as in the proof of~\eqref{equation:pythagoras} to obtain
	\begin{align*}
		&\enorm{u_L^\star -v_L }^2
		= \enorm{u_L^\star -\Phi(v_L) }^2 - \enorm[\Big]{\sum_{\ell = 0}^{L-1} \lambda_\ell \rho_\ell}^2 \!\! \!+ 2 \, \edual[\Big]{u_L^\star - v_L}{\!\!\sum_{\ell = 0}^{L-1} \lambda_\ell \rho_\ell} \! + \lambda_L \!\! \sum\limits_{z \in \VV_L} \!\! \enorm{\rho_{L,z}}^2\!\!
		\\
		&\eqreff*{equation:int_res_LB_theorem}= \ \enorm{u_L^\star - \Phi(v_L)}^2 + \enorm{\rho_0}^2  - \sum_{\ell = 1}^{L-1} \enorm{\lambda_\ell \rho_\ell}^2
		+ 2 \sum_{\ell = 1}^{L-1} \lambda_\ell \sum\limits_{z \in \VV_\ell^+} \enorm{\rho_{\ell,z}}^2 + \lambda_L \sum\limits_{z \in \VV_L} \enorm{\rho_{L,z}}^2
		\\
		&
		\eqreff*{equation:contraction}\le \qctr^2 	\enorm{u_L^\star - v_L }^2 + 2\, \etalg(v_L)^2.
	\end{align*}
	Rearranging this estimate proves the upper bound in~\eqref{equation:equivalence} with $\crel^2 = 2/(1-\qctr^2) > 1$.
	
	Second, suppose the upper bound in~\eqref{equation:equivalence}. Then, it follows that
	\begin{align*}
		\enorm{u_L^\star - \Phi(v_L)}^2
		\reff{equation:pythagoras}\le \enorm{u_L^\star - v_L}^2 - \etalg (v_L)^2
		&\reff{equation:equivalence}\le \enorm{u_L^\star - v_L}^2 - \crel^{-2} \, \enorm{u_L^\star - v_L}^2.
	\end{align*}
	This verifies the solver contraction~\eqref{equation:contraction} for $\qctr^2 = 1- \crel^{-2} \in (0,1)$ and concludes the equivalence proof.
	\end{proof}

	\begin{proof}[Proof of Theorem~\ref{theorem:solver}, upper bound in~\eqref{equation:equivalence}]
	We use the stable decomposition of Proposition~\ref{lemma:multilevel_decomposition} on the algebraic error $ u_L^\star - v_L \in \V_L^p$ to obtain $v_0 \in \V_0^1, \   v_{\ell, z} \in \V_{\ell, z}^1$ and $v_{L,z} \in \V_{L,z}^p$ such that
	\begin{align}
		&u_L^\star - v_L = v_0 + \sum_{\ell=1}^{L-1}  \sum_{z \in \VV_{\ell}^{+}}  v_{\ell, z} + \sum_{z \in \VV_L}  v_{L, z} \notag
		\\
		&  \text{and} \quad \enorm{v_0}^2 + \sum_{\ell=1}^{L-1} \sum_{z \in \VV_{\ell}^{+}} \enorm{v_{\ell, z}}^2
		+ \sum_{z \in \VV_L} \enorm{v_{L, z}}^2
		\le  \csdsqu \enorm{u_L^\star-v_L}^2. \label{equation:error_decomposition_stability}
	\end{align}
	Note that $\sigma_\ell = \sum_{k=0}^\ell \lambda_k \rho_k$ for all $\ell = 0, \dots, L$; see Algorithm~\ref{algorithm:solver}. We use \eqref{equation:error_decomposition_stability} to develop
	\begin{align*}
		\enorm{u_L^\star-v_L}^2
		&=   \edual[\Big]{u_L^\star-v_L }{v_0 + \sum_{\ell=1}^{L-1}  \sum_{z \in \VV_\ell^+} v_{\ell, z}   +  \sum_{z \in \VV_L}  v_{L, z}}
		\\
		&
		\stackrel{\substack{\eqref{equation:residual_functional} \\ \eqref{equation:alg_step1}}}=    \edual{\rho_0}{v_0} + \sum_{\ell=1}^{L-1}  \sum_{z \in \VV_{\ell}^{+}}   R_L(v_{\ell, z}) +  \sum_{z \in \VV_L}  R_L(v_{L, z})\\
		&\eqreff{equation:alg_step2}= \! \edual{\rho_0}{v_0}
		\!  + \!\!  \sum_{\ell=1}^{L-1}  \! \sum_{z \in \VV_{\ell}^{+}}   \! \!  \big(
		\edual{\rho_{\ell, z}}{v_{\ell, z}}  + \edual{\sigma_{\ell-1}}{v_{\ell,z}}     \big)
		\\
		&\qquad
		+   \!\!  \sum_{z \in \VV_L}   \! \!  \big( \!
		\edual{\rho_{L, z}}{v_{L, z}}   +  \edual{\sigma_{L-1}}{v_{L,z}}  \big).
	\end{align*}
	Expanding $\sigma_\ell = \rho_0 + \sum_{k = 1}^{\ell} \lambda_\ell \rho_{\ell}$ and rearranging the terms finally leads to
	\begin{align*}
		\enorm{u_L^\star-v_L}^2
		&= \edual[\big]{\rho_0}{v_0 + \sum_{\ell=1}^{L-1}  \sum_{z \in \VV_{\ell}^{+}}  v_{\ell, z} +\sum_{z \in \VV_L}   v_{L, z} }
		+   \sum_{\ell=1}^{L-1}  \sum_{z \in \VV_{\ell}^{+}}   \!  \edual{\rho_{\ell, z}}{v_{\ell, z}}
		\\
		&  \quad
		+\!\! \sum_{z \in \VV_L} \! \edual{\rho_{L, z}}{v_{L, z}}
		+  \! \sum_{\ell=1}^{L-1}  \sum_{k=1}^{\ell-1} \edual[\Big]{\lambda_k \rho_k}{\!\!\!\sum_{z \in \VV_{\ell}^{+}}  \!\!v_{\ell, z}} +\! \sum_{k=1}^{L-1} \! \edual[\Big]{\lambda_k \rho_k}{\!\!\!\sum_{z \in \VV_L}  \!\! v_{L, z}}.\!\!
	\end{align*}
	Note that, until this point, only equalities are used. In the following, we will estimate each of the constituting terms of the algebraic error using Young's inequality in the form $ab \le (\alpha/2)\, a^2 + (2\alpha)^{-1}\, b^2$ with $\alpha = 4 \csd^{2}$, the strengthened Cauchy--Schwarz inequality, and patch overlap arguments as done in the proof of Lemma~\ref{lemma:norm_estimates}.
	Using the fact that $\lambda_0 =1$ and the decomposition of the error $u_L^\star-v_L = v_0 + \sum_{\ell=1}^{L-1}  \sum_{z \in \VV_{\ell}^{+}}  v_{\ell, z} + \sum_{z \in \VV_L}  v_{L, z}$, we see that the first term yields
	\begin{align*}
		\edual[\big]{\rho_0}{v_0 + \sum_{\ell=1}^{L-1}  \sum_{z \in \VV_{\ell}^{+}}  v_{\ell, z} + \! \! \sum_{z \in \VV_L}   v_{L, z} } \eqreff*{equation:error_decomposition_stability}=
		~\edual{\rho_0}{u_L^\star-v_L}
		\le
		\cfrac{1}{2}  \, \enorm{\lambda_0 \rho_0}^2
		+\cfrac{1}{2} \, \enorm{u_L^\star-v_L}^2.
	\end{align*}
	For the second term, we obtain that
	\begin{align*}
		\sum_{\ell=1}^{L-1} \sum_{z \in \VV_{\ell}^{+}}
		\edual{\rho_{\ell, z}}{v_{\ell, z}}
		& \le 2\csdsqu   \sum_{\ell=1}^{L-1} \sum_{z \in \VV_{\ell}^{+}}
		\enorm{\rho_{\ell, z}}^2 +   \frac{1 }{8 \csdsqu }
		\sum_{\ell=1}^{L-1}  \sum_{z \in \VV_{\ell}^{+}}
		\enorm{v_{\ell, z}}^2\\
		&\stackrel{\substack{\eqref{equation:bounds_stepsizes}}}\le
		2\csdsqu (d+1 )\sum_{\ell=1}^{L-1} \lambda_\ell \sum_{z \in \VV_{\ell}^{+}}
		\enorm{\rho_{\ell, z}}^2 +   \frac{1 }{8 \csdsqu }
		\sum_{\ell=1}^{L-1}  \sum_{z \in \VV_{\ell}^{+}}
		\enorm{v_{\ell, z}}^2,
	\end{align*}
	and similarly for the third term
	\begin{align*}
		\sum_{z \in \VV_L}
		\edual{\rho_{L, z}}{v_{L, z}}
		&\stackrel{\substack{\eqref{equation:bounds_stepsizes}}}\le
		2 \csdsqu  (d+1)\lambda_L \sum_{z \in \VV_L}
		\enorm{\rho_{L, z}}^2 +   \frac{1 }{8 \csdsqu}
		\sum_{z \in \VV_L}
		\enorm{v_{L, z}}^2.
	\end{align*}
	For the fourth term, we have
	\begin{align*}
		\sum_{\ell=1}^{L-1}  \sum_{k=1}^{\ell-1} \edual[\big]{\lambda_k \rho_k}{&\!\!\sum_{z \in \VV_{\ell}^{+}} v_{\ell, z}}
		\stackrel{\eqref{equation:strengthened_CS_adapt}}\le
		\cscs \Big( \sum_{k=1}^{L-2} \sum_{w \in \VV_{k}^{+}} \enorm{   \lambda_k \rho_{k,w}}^2 \Big)^{1/2} \Big( \sum_{\ell=1}^{L-1}\sum_{z \in \VV_{\ell}^{+}} \enorm{v_{\ell, z}}^2 \Big)^{1/2}\\
		&\le 2\cscs^2  \csdsqu \sum_{k=0}^{L-2}  \sum_{w \in \VV_{k}^{+}} \enorm{   \lambda_k \rho_{k,w}}^2
		+\frac{1 }{8 \csdsqu} \sum_{\ell=1}^{L-1}  \sum_{z \in \VV_{\ell}^{+}} \enorm{v_{\ell, z}}^2 \\
		&\stackrel{\eqref{equation:bounds_stepsizes}}\le 2\cscs^2  \csdsqu(d+1) \sum_{k=0}^{L-2}  \lambda_k \sum_{w \in \VV_{k}^{+}} \enorm{  \rho_{k,w}}^2
		+\frac{1 }{8 \csdsqu} \sum_{\ell=1}^{L-1}  \sum_{z \in \VV_{\ell}^{+}} \enorm{v_{\ell, z}}^2.
	\end{align*}
	Finally, to treat the last term where higher-order terms appear together with a sum over levels, we proceed similarly as in~\cite[Proof of Theorem 4.8]{CNX_12} and obtain
	\begin{align*}
		\sum_{k=1}^{L-1}  \edual[\Big]{\lambda_k \rho_k}{\sum_{z \in \VV_L} v_{L, z}}
		&=  \sum_{z \in \VV_L}    \edual[\Big]{\sum_{k=1}^{L-1} \lambda_k \rho_k}{v_{L, z}}\\
		&\le 2\csdsqu \sum_{z \in \VV_L}  \enorm[\Big]{  \sum_{k=1}^{L-1} \lambda_k   \rho_k}_{\omega_{L,z}}^2   +\frac{1 }{8 \csdsqu}  \sum_{z \in \VV_L} \enorm{v_{L, z}}^2.
	\end{align*}
	For the first term of the last bound, we have that
	\begin{align*}
		\sum_{z \in \VV_L}  \enorm[\Big]{  &\sum_{k=1}^{L-1} \lambda_k   \rho_k}_{\omega_{L,z}}^2 \lesssim  \enorm[\Big]{ \sum_{k=1}^{L-1} \lambda_k    \rho_k}^2 = \sum_{k=1}^{L-1}  \enorm{\lambda_k \rho_k}^2  +  2 \, \sum_{\ell=1}^{L-1} \sum_{k=1}^{\ell-1} \edual{\lambda_k \rho_k}{\lambda_\ell \rho_\ell} \\
		&\eqreff*{equation:strengthened_CS_adapt}\le \
		\sum_{k=1}^{L-1}  \enorm{\lambda_k \rho_k}^2  +  2
		\cscs \Big( \sum_{k=1}^{L-2}   \sum_{w \in \VV_{k}^{+}} \enorm{   \lambda_k \rho_{k,w}}^2 \Big)^{1/2} \Big( \sum_{\ell=1}^{L-1} \sum_{z \in \VV_{\ell}^{+}} \enorm{\lambda_\ell \rho_{\ell, z}}^2 \Big)^{1/2} \\
		&\stackrel{\substack{\eqref{equation:bounds_stepsizes} \\ \eqref{equation:loc_global_stepsizes}}}\le 
		\big( 1+ 2 \, \cscs(d+1) \big) \, \Bigl( \sum_{\ell=1}^{L-1}    \lambda_\ell
		\sum_{z \in \VV_\ell^+}     \enorm{  \rho_{\ell,z}}^2 \Bigr).
	\end{align*}
	Summing all the estimates of the algebraic error components and defining the constant $\crel^2 \coloneqq \max \{1/2, \csdsqu(d+1)\big( 2 + \cscs^2 + 2 \, \cscs(d+1)^{1/2}\big)\}$, we see that
	\begin{align*}
		\enorm{u_L^\star-v_L}^2
		&\le
		\cfrac{1}{2} \,  \enorm{\lambda_0 \rho_0}^2
		+\cfrac{1}{2} \, \enorm{u_L^\star-v_L}^2+ \! 4 \crel^2  \Big(  \sum_{\ell=1}^{L-1}   \lambda_\ell \! \!
		\sum_{z \in \VV_\ell^+}   \!  \enorm{  \rho_{\ell,z}}^2 +
		\lambda_L \! \!
		\sum_{z \in \VV_L}   \! \!   \enorm{  \rho_{L,z}}^2 \Big) \\
		&\qquad +  \frac{1}{4 \csdsqu} \Big(
		\sum\limits_{\ell=1}^{L-1}  \sum\limits_{z \in \VV_{\ell}^{+}} \enorm{v_{\ell, z}}^2 + \sum_{z \in \VV_L} \enorm{v_{L, z}}^2 \Big)
		\\
		&\stackrel{\eqref{equation:error_decomposition_stability}}\le
		4 \crel^2  \etalg(v_L)^2 + \cfrac{3}{4} \, \enorm{u_L^\star-v_L}^2.
	\end{align*}
	After rearranging the terms, we finally obtain that
	\begin{align}\label{equation:lower_bound_estimator}
		\enorm{u_L^\star-v_L}^2
		&\le \crel^2 \, \etalg(v_L)^2.
	\end{align}
	This proves the upper bound of \eqref{equation:equivalence} and thus concludes the proof of Theorem~\ref{theorem:solver}.
	\end{proof}
	
	\printbibliography

\end{document}